\newtheorem{thm}{Theorem}[section]
\newtheorem{lemma}[thm]{Lemma}
\theoremstyle{definition}
\newtheorem{remark}[thm]{Remark}
\def\XXint#1#2#3{{\setbox0=\hbox{$#1{#2#3}{\int}$}
         \vcenter{\hbox{$#2#3$}}\kern-.5\wd0}}
\def\e{\varepsilon}
\numberwithin{equation}{section}
\begin{document}

\title{Uniform  Boundary Controllability \\ and Homogenization of
Wave Equations (Revised)}

\author{Fanghua Lin \thanks{Supported in part by NSF grant DMS-1501000.}
\qquad
Zhongwei Shen\thanks{Supported in part by NSF grant DMS-1600520.}}
\date{}
\maketitle


\begin{abstract}

We obtain sharp convergence rates, using Dirichlet correctors,
 for solutions of wave equations in a bounded domain with 
rapidly oscillating periodic coefficients.
The results are used to prove the exact boundary
controllability that is uniform in $\e$ - the scale of the microstructure,
for the projection of solutions to the 
subspace generated by  the eigenfunctions with eigenvalues less that $C\e^{-1/2}$.

\medskip

\noindent{\it Keywords}:  Boundary Controllability; Oscillating Coefficient;  Wave Equation; Homogenization, Convergence Rate.

\medskip

\noindent {\it MSC2020}: 35Q93; 49J20; 35B27.

\end{abstract}

\section{\bf Introduction}\label{section-1}

In this paper we study the exact boundary controllability, which is uniform in $\e>0$, 
of the wave operator, 
\begin{equation}\label{t-op}
\partial_t^2 +\mathcal{ L}_\e
\end{equation}
in a bounded domain,
where the elliptic operator $\mathcal{L}_\e$  is given by
\begin{equation}\label{operator}
\mathcal{L}_\e =-\text{\rm div} \big( A(x/\e)\nabla\big),
\end{equation}
and $\e>0$ is a small parameter.
Throughout we will assume that the $d\times d$ coefficient matrix $A=A(y)=(a_{ij} (y)) $
is real, bounded measurable, satisfies the ellipticity condition,
\begin{equation}\label{ellipticity}
\mu |\xi|^2 \le \langle A\xi, \xi \rangle \le \frac{1}{\mu} |\xi|^2 \quad \text{ for any } \xi \in \mathbb{R}^d,
\end{equation}
 where $\mu>0$, the symmetry condition,
\begin{equation}\label{symmetry}
a_{ij}  (y) =a_{ji} (y) \quad \text{ for } 1\le i, j\le d,
\end{equation}
and the periodicity condition,
\begin{equation}\label{periodicity}
A(y+z) =A(y) \quad \text{ for any } y \in \mathbb{R}^d \text{ and  }  z\in \mathbb{Z}^d.
\end{equation}

Let $\Omega$ be a bounded domain in $\mathbb{R}^d$.
Given initial data $(\theta_{\e, 0}, \theta_{\e, 1}) 
\in L^2(\Omega) \times H^{-1}(\Omega)$,
one is interested in finding $T>0$ and a control
$g_\e\in L^2(S_T)$ such that
the weak solution of the  evolution problem,
\begin{equation}\label{E-0}
\left\{
\aligned
 &(\partial_t^2 +\mathcal{L}_\e )v_\e   =0  \quad  \text{ in } \Omega_T=\Omega \times (0, T],\\
 &  v_\e =g_\e  \quad \text{ on } S_T=\partial\Omega \times [0, T] ,\\
  & v_\e (x, 0)  =\theta_{\e, 0} (x), \quad \partial_t v_\e (x, 0)=\theta_{\e, 1} (x) \quad \text{ for } x\in \Omega,
  \endaligned
  \right.
\end{equation}
satisfies the conditions 
\begin{equation}\label{control-1}
v_\e (x, T)= \partial_t v_\e (x, T)=0 \quad \text{ for } x\in \Omega.
\end{equation}
This  classical control problem in highly heterogeneous media was proposed by J.-L. Lions in \cite{Lions-1988}.
Let $u_\e$ be the solution of the initial-Dirichlet problem, 
\begin{equation}\label{BVP-0}
\left\{
\aligned
 &(\partial_t^2 +\mathcal{L}_\e )u_\e   =0  \quad  \text{ in } \Omega_T,\\
 &  u_\e =0  \quad \text{ on } S_T,\\
  & u_\e (x, 0)  =\varphi_{\e, 0} (x), \quad \partial_t u_\e (x, 0)=\varphi_{\e, 1} (x) \quad \text{ for } x\in \Omega,
  \endaligned
  \right.
\end{equation}
where $\varphi_{\e, 0} \in H^1_0(\Omega)$ and
$\varphi_{\e, 1} \in L^2(\Omega)$.
By the Hilbert Uniqueness Method (HUM),
 the existence of  a control $g_\e$, which is uniformly bounded in $L^2(S_T)$ for $\e>0$,
 is equivalent to the following two estimates, usually called observability inequalities,
\begin{equation}\label{upper-b}
\frac{1}{T} \int_0^T\!\!\!\int_{\partial\Omega}
|\nabla u_\e|^2\, d\sigma dt
\le C   \Big\{ \| \nabla \varphi_{\e, 0} \|_{L^2(\Omega)}^2
+ \|\varphi_{\e, 1} \|_{L^2(\Omega)}^2 \Big\},
\end{equation}
and
\begin{equation}\label{lower-b}
c   \Big\{ \|\nabla \varphi_{\e, 0} \|_{L^2(\Omega)}^2
+ \|\varphi_{\e, 1} \|_{L^2(\Omega)}^2 \Big\}
\le
\frac{1}{T}  \int_0^T\!\!\!\int_{\partial\Omega}
|\nabla u_\e|^2\, d\sigma dt,
\end{equation}
with positive constants $C$ and $c$ independent of $\e>0$
(see \cite{Lions-1988}).
However, it has been known since early 1990s that both (\ref{upper-b}) and (\ref{lower-b})
fail to hold uniformly in $\e>0$, even in the case $d=1$ \cite{Ave-1992}.
We remark that for $\e=1$ (without the periodicity condition),
a fairly complete solution of the exact boundary controllability problem 
for second-order hyperbolic equations may be found
in \cite{Bardos-1992} by C. Bardos, G. Lebeau, and J. Rauch,
using microlocal analysis.
Also see related work in \cite{Burq-1997, Bardos-2006} and references therein.

In this paper we shall show that estimates (\ref{upper-b}) and (\ref{lower-b}) hold uniformly  
if the initial data  $(\varphi_{\e, 0}, \varphi_{\e, 1})$ in (\ref{BVP-0})  are taken from a low-frequency subspace of $H^1_0(\Omega)\times L^2(\Omega)$.
More precisely, 
let $\{\lambda_{\e, k}: k=1, 2, \dots \}$ denote the sequence of Dirichlet eigenvalues  in an increasing order for
$\mathcal{L}_\e$ in $\Omega$.
Let $\{ \psi_{\e, k}: k=1, 2, \dots \}$  be a set of Dirichlet eigenfunctions in $H^1_0(\Omega)$ for $\mathcal{L}_\e$
in $\Omega$ such that $\{ \psi_{\e, k}\}$ forms an orthonormal basis for $L^2(\Omega)$ and
$\mathcal{L}_\e (\psi_{\e, k}) =\lambda_{\e, k} \psi_{\e, k}$ in $\Omega$.
Define
\begin{equation}\label{A-N}
\mathcal{A}_N 
=\Big\{ h= \sum_{\lambda_{\e, k} \le N} a_k \psi_{\e, k}:  \ a_k \in \mathbb{R}  \Big\}.
\end{equation}

\begin{thm}\label{main-theorem-2}
Assume  $A=A(y)$ satisfies conditions \eqref{ellipticity},
\eqref{symmetry} and \eqref{periodicity}. Also assume that there exists $M>0$ such that 
\begin{equation}\label{Lip-C}
| A(x)-A(y)|\le M |x-y| \quad \text{ for any } x, y\in \mathbb{R}^d.
\end{equation}
Let $\Omega$ be a bounded $C^3$ domain in $\mathbb{R}^d$.
Let $u_\e$ be a solution of \eqref{BVP-0}  with initial data $(\varphi_{\e, 0}, \varphi_{\e, 1})
\in \mathcal{A}_N \times \mathcal{A}_N$.
Then,  if $N \le C_0T^{-1} \e^{-1/2}$ for some $C_0>0$, 
the inequality  \eqref{upper-b} holds with constant $C$ depending only on $d$, $\mu$, $C_0$, $M$ and $\Omega$.
Moreover, there exist $c_0>0$ and $T_0>0$, depending only on $d$, $\mu$, $M$ and $\Omega$,
such that if $N\le c_0  T^{-1} \e^{-1/2}$ and $T\ge T_0$,
then (\eqref{lower-b}) holds with constant $C$ depending only on $d$, $\mu$, $M$ and $\Omega$.
\end{thm}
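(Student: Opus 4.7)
My plan is to reduce (\ref{upper-b}) and (\ref{lower-b}) for $u_\e$ to the analogous two-sided observability inequality for the homogenized problem. Let $u_0$ solve
\begin{equation*}
(\partial_t^2 + \mathcal{L}_0) u_0 = 0 \ \ \text{in}\ \Omega_T, \qquad u_0 = 0 \ \ \text{on}\ S_T, \qquad (u_0,\partial_t u_0)|_{t=0} = (\varphi_{\e,0},\varphi_{\e,1}),
\end{equation*}
where $\mathcal{L}_0 = -\operatorname{div}(\widehat A\nabla)$ is the homogenized, constant-coefficient operator. Since $\widehat{A}$ is constant symmetric positive-definite and $\Omega$ is a bounded $C^3$ domain, a multiplier / Rellich-Pohozaev argument of Ho-Lions type yields, for every $T\ge T_0(\Omega,\widehat{A})$, the classical observability
\begin{equation*}
c\bigl(\|\nabla\varphi_{\e,0}\|_{L^2(\Omega)}^2+\|\varphi_{\e,1}\|_{L^2(\Omega)}^2\bigr)\le \frac{1}{T}\int_0^T\!\!\int_{\partial\Omega}|\nabla u_0|^2\,d\sigma\,dt\le C\bigl(\|\nabla\varphi_{\e,0}\|_{L^2(\Omega)}^2+\|\varphi_{\e,1}\|_{L^2(\Omega)}^2\bigr).
\end{equation*}

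Next I would invoke the sharp convergence rate estimates (proved earlier in the paper via the Dirichlet correctors $\Phi_j^\e$, which agree with $x_j$ on $\partial\Omega$) to write $u_\e = u_0 + \e(\Phi_j^\e/\e - x_j/\e)\partial_j u_0 + w_\e$, with $w_\e$ vanishing on $\partial\Omega$ and satisfying an energy bound of the form
\begin{equation*}
\sup_{0\le t\le T}\|w_\e(\cdot,t)\|_{H^1(\Omega)}+\|\partial_t w_\e\|_{L^\infty(0,T;L^2(\Omega))}\le C\,\e\,T^{1/2}\,\mathcal{R}(u_0),
\end{equation*}
where $\mathcal{R}(u_0)$ is a higher-order regularity seminorm of $u_0$. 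Because $\Phi_j^\e - x_j$ vanishes on $\partial\Omega$, its contribution to the boundary flux of $u_\e$ reduces (via the $C^3$ geometry and the trace of $\nabla(\Phi_j^\e-x_j)$) to a tangential correction whose size is bounded by $\e$ times the $H^2$-trace norms of $u_0$. A Rellich-type identity then allows one to bound
\begin{equation*}
\left|\int_0^T\!\!\int_{\partial\Omega}\bigl(|\nabla u_\e|^2-|\nabla u_0|^2\bigr)d\sigma\,dt\right|\le C\bigl(\e\,T^{1/2}\mathcal{R}(u_0)\bigr)\cdot\bigl(\text{energy of }u_0\bigr)^{1/2}+\text{lower order}.
\end{equation*}

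To dominate this error by the main term, one uses the spectral cut-off: since $(\varphi_{\e,0},\varphi_{\e,1})\in\mathcal A_N\times\mathcal A_N$, we have $\|\mathcal L_\e^{k/2}\varphi_{\e,j}\|_{L^2}\le N^{k/2}\|\varphi_{\e,j}\|_{L^2}$, and the Lipschitz assumption (\ref{Lip-C}) on $A$ together with uniform $W^{2,p}$ regularity for eigenfunctions of $\mathcal L_\e$ on $C^3$ domains translates these spectral bounds into $H^s(\Omega)$ bounds on $\varphi_{\e,j}$, and hence on $u_0(\cdot,t)$, by $N^{s/2}$ times the basic energy. Tracking powers of $N$, $T$, and $\e$ in the error above produces a quantity of order $\e\,T\,N^{3/2}$ relative to the main term; requiring this to be $\le C_0$ (resp.\ $\le c_0$ small) is exactly the hypothesis $N\le C_0 T^{-2/3}\e^{-2/3}$ (resp.\ $N\le c_0 T^{-2/3}\e^{-2/3}$), which yields (\ref{upper-b}) in the first case and, by absorbing a small multiple of the main term for the lower bound together with $T\ge T_0$, also (\ref{lower-b}).

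The main obstacle is to control the boundary flux $\int_0^T\!\int_{\partial\Omega}|\nabla u_\e|^2\,d\sigma\,dt$ in terms of interior quantities with a \emph{sharp} rate, since a naive application of the trace theorem loses a full derivative and would not yield the exponent $2/3$. Overcoming this requires combining the Rellich-Pohozaev multiplier identity for $\mathcal L_\e$ with the Dirichlet-corrector convergence estimate so that the boundary trace of $\nabla u_\e$ is replaced by $\nabla u_0$ plus a remainder controlled in $H^1(\Omega)$ rather than $H^{3/2}(\Omega)$. Once this sharp trace-type bound is in place, the spectral calibration $N\le cT^{-2/3}\e^{-2/3}$ exactly matches what is needed for the error to be absorbed uniformly in $\e$.
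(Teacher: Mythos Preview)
Your overall architecture---reduce to the constant-coefficient observability via the Dirichlet-corrector expansion, control the boundary trace of $\nabla w_\e$ by a Rellich identity, and close with the spectral cutoff---matches the paper's. But there is a genuine gap in your choice of $u_0$: you take $u_0$ with the \emph{same} initial data $(\varphi_{\e,0},\varphi_{\e,1})$. With that choice the convergence-rate estimate (Theorem~\ref{main-theorem-1}) produces on its right-hand side the term $\|\mathcal{L}_\e(\varphi_{\e,0})-\mathcal{L}_0(\varphi_{\e,0})\|_{H^{-1}(\Omega)}$, which is of size $\|\nabla\varphi_{\e,0}\|_{L^2(\Omega)}$, not $O(\e)$; this $O(1)$ error cannot be absorbed. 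Worse, your claim that ``uniform $W^{2,p}$ regularity for eigenfunctions of $\mathcal{L}_\e$'' converts spectral bounds into $H^s$ bounds on $\varphi_{\e,j}$ is false: the coefficients $A(x/\e)$ have derivatives of order $\e^{-1}$, so $\|\psi_{\e,k}\|_{H^2(\Omega)}$ is \emph{not} uniformly comparable to $\lambda_{\e,k}\|\psi_{\e,k}\|_{L^2(\Omega)}$. Thus the higher regularity $\mathcal{R}(u_0)$ you need is simply unavailable with your data.

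The paper fixes both issues simultaneously by choosing $\varphi_1=\varphi_{\e,1}$ and $\varphi_0\in H^1_0(\Omega)$ via $\mathcal{L}_0(\varphi_0)=\mathcal{L}_\e(\varphi_{\e,0})$. This kills the first error term outright, and---crucially---the $H^2$ and $H^3$ norms of $\varphi_0$ (hence of $u_0(\cdot,t)$) are now controlled by $\|\mathcal{L}_\e(\varphi_{\e,0})\|_{L^2}$ and $\|\mathcal{L}_\e(\varphi_{\e,0})\|_{H^1}$ through \emph{constant-coefficient} elliptic regularity; these in turn are bounded by $N^{1/2}$ and $N$ times the energy via the spectral expansion, since $\mathcal{L}_\e\psi_{\e,k}=\lambda_{\e,k}\psi_{\e,k}$ needs no derivatives of $A^\e$. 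The power-counting then gives exactly $\e T N^{3/2}$ as the dominant error, as you anticipated. One further point you gloss over: for the lower bound one must pass from $|\nabla u_0|$ to $|(\nabla\Phi_\e)\nabla u_0|$ on $\partial\Omega$, which requires the nontrivial fact $|\det(\nabla\Phi_\e)|\ge c_0>0$ on $\partial\Omega$ (proved in \cite{KLS-eigenvalue}); merely bounding $|\nabla\Phi_\e|$ from above does not suffice there.
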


Following \cite{Castro-1999},
one may use Theorem \ref{main-theorem-2} to prove the following result on the uniform 
boundary controllability. Let $N\le \delta T^{-1} \e^{-1/2}$ and $T\ge T_0$, where $\delta=\delta(d, A, \Omega)>0$ is sufficiently small.
Given $(\theta_{\e, 0}, \theta_{\e, 1}) \in L^2(\Omega) \times H^{-1} (\Omega)$,
there exists $g_\e \in L^2(S_T)$ such that the solution of (\ref{E-0})
satisfies the conditions,
\begin{equation}\label{P-C}
P_N v_\e (x, T) =0 \quad \text{ and } \quad P_N \partial_t v_\e (x, T)=0 \quad \text{ for } x\in \Omega,
\end{equation}
where $P_N$ denotes the projection operator from $L^2(\Omega)$ or $H^{-1}(\Omega)$ 
to the space $\mathcal{A}_N$.
Moreover, the control $g_\e$ satisfies the uniform estimates,
\begin{equation}\label{upper-b-C}
c\| g_\e \|_{L^2(S_T)}
\le  \Big\{ \|  P_N \theta_{\e, 0}\|_{L^2(\Omega)}
+\| P_N \theta_{\e, 1} \|_{H^{-1}(\Omega)} \Big\}
\le C \| g_\e\|_{L^2(S_T)},
\end{equation} 
where $C>0$ and $c>0$ are independent of $\e$.
See Section \ref{section-4}.

In the case $d=1$, it was proved by C. Castro  in \cite{Castro-2000-l} that 
the estimates (\ref{upper-b}) and (\ref{lower-b}) hold uniformly 
if the initial data are taken from $\mathcal{A}_N \times \mathcal{A}_N$
and $N\le \delta \e^{-2}$, where $\delta>0$ is sufficiently small.
Also see \cite{CZ-2000-h} for the case where the initial data are taken from a subspace generated by 
the eigenfunctions with  eigenvalues greater than $C \e^{-2-\sigma}$ for some $\sigma>0$.
The approaches used in \cite{Castro-2000-l, CZ-2000-h} do not extend to the multi-dimensional case.
To the best of the authors' knowledge, the only results in the case $d\ge 2$ are found in \cite{AL-1989, Lebeau-2000}.
In \cite{AL-1989}
M. Avellaneda and the first author used the asymptotic expansion of the Poisson's kernel for the elliptic operator $\mathcal{L}_\e$ in $\Omega$
to identify the weak  limits of the controls. 
In \cite{Lebeau-2000} G. Lebeau considered 
the wave operator with oscillating density, 
$\rho (x, x/\e) \partial_t^2 -\Delta_g$, where  $\Delta_g$ is the 
Laplace operator for some fixed smooth metric, and the function
$\rho(x, y)$ is periodic in $y$.
Theorem \ref{main-theorem-2} seems to be the first result
on  the observability  inequalities (\ref{upper-b}) and (\ref{lower-b}) for wave operators with  oscillating coefficients 
$A(x/\e)$ in higher dimensions.

Let 
$$
u_\e (x, t)=\cos (\sqrt{\lambda_{\e, k}} t ) \psi_{\e, k}.
$$
Then $(\partial_t^2+\mathcal{L}_\e) u_\e=0$ in $\Omega_T$ and
$u_\e=0$ on $S_T$.
Also, $u_\e (x, 0)=\psi_{\e, k} (x)$ and $\partial_t u_\e (x, 0)=0$ for $x\in \Omega$.
Thus the inequalities (\ref{upper-b}) and (\ref{lower-b}) would  imply that
\begin{equation}\label{trace-bound}
c \lambda_{\e, k} \le \int_{\partial\Omega} |\nabla \psi_{\e, k}|^2\, d\sigma
\le C \lambda_{\e, k}.
\end{equation}
It was proved in \cite{Ave-1992, Castro-2000-l} that (\ref{trace-bound}) cannot hold uniformly in $\e>0$ and $k\ge 1$.
Counter-examples were constructed using  eigenfunctions with eigenvalues 
$
\lambda_{\e, k} \sim \e^{-2}
$
\ - the wave length of the solutions is of the order of the size of the microstructure.
Also see related work in \cite{HT-2002} by A. Hassell and T. Tao for 
Dirichlet eigenfunctions on a compact Riemannian manifold with boundary.
In \cite{KLS-eigenvalue} C. Kenig and  the present authors  proved that for $d\ge 2$, 
\begin{equation}\label{trace-bound-1}
\int_{\partial\Omega} |\nabla \psi_{\e, k}|^2\, d\sigma \le C \lambda_{\e, k} (1+ \e \lambda_{\e, k}),
\end{equation}
if $\e^2\lambda_{\e, k} \le 1$,
where $C$ is independent of $\e$ and $k$.
This, in particular, implies that the upper bound in (\ref{trace-bound})
holds if $\e \lambda_{\e, k}\le 1$.
Furthermore, it is proved in \cite{KLS-eigenvalue} that
if $\e \lambda_{\e, k} \le \delta$, where $\delta>0$ depends only on $A$ and $\Omega$,
then the lower bound in (\ref{trace-bound}) also holds uniformly in $\e$ and $k$.
These results suggest that one may be able to extend Theorem \ref{main-theorem-2}
to the case $N\le C \e^{-1}$.
But this remains unknown.
In view of the one-dimensional results in \cite{Castro-2000-l, Castro-1999},
one may conjecture further that the main conclusion in Theorem \ref{main-theorem-2}
is valid when $N\le \delta \e^{-2}$ and $\delta$ is sufficiently small.

We now describe our approach to Theorem \ref{main-theorem-2},
which is based on homogenization.
Under the assumptions (\ref{ellipticity}), (\ref{symmetry}) and (\ref{periodicity}) as
well as suitable conditions on $F$, $\varphi_{\e, 0}$ and $\varphi_{\e, 1}$,
the solution $u_\e$ of the initial-Dirichlet problem, 
\begin{equation}\label{BVP-F}
\left\{
\aligned
 &(\partial_t^2 +\mathcal{L}_\e )u_\e   =F  \quad  \text{ in } \Omega_T,\\
 &  u_\e =0  \quad \text{ on } S_T,\\
  & u_\e (x, 0)  =\varphi_{\e, 0} (x), \quad \partial_t u_\e (x, 0)=\varphi_{\e, 1} (x) \quad \text{ for } x\in \Omega,
  \endaligned
  \right.
\end{equation}
converges strongly in $L^2 (\Omega_T)$ to 
the solution of the homogenized problem, 
\begin{equation}\label{BVP-0F}
\left\{
\aligned
 &(\partial_t^2 +\mathcal{L}_0 )u_0   =F  \quad  \text{ in } \Omega_T,\\
 &  u_0 =0  \quad \text{ on } S_T,\\
  & u_0 (x, 0)  =\varphi_0 (x), \quad \partial_t u_0 (x, 0)=\varphi_1 (x) \quad \text{ for } x\in \Omega,
  \endaligned
  \right.
\end{equation}
where $\mathcal{L}_0$ is an elliptic operator with constant coefficients
(see e.g. \cite{BLP}).
In the first part of this paper we shall investigate the problem of
convergence rates.

Let $\Phi_\e =(\Phi_{\e, 1}, \Phi_{\e, 2}, \dots, \Phi_{\e, d} )$ denote the Dirichlet corrector for 
the operator $\mathcal{L}_\e$ in $\Omega$, where, for $1\le j \le d$,
the function $\Phi_{\e, j}$ is the solution in $H^1(\Omega)$  of the Dirichlet problem,
\begin{equation}\label{DC}
\left\{
\aligned 
\mathcal{L}_ \e (\Phi_{\e, j}) & =0 & \quad & \text{ in } \Omega,\\
\Phi_{\e, j}  & = x_j & \quad & \text{ on } \partial\Omega.
\endaligned
\right.
\end{equation}

\begin{thm}\label{main-theorem-1}
Assume $A=A(y)$ satisfies conditions \eqref{ellipticity}, \eqref{symmetry} and \eqref{periodicity}.
Let $u_\e$ be a weak solution of \eqref{BVP-F},
where $\Omega$ is a bounded Lipschitz domain in $\mathbb{R}^d$.
Let 
\begin{equation}\label{w-0}
w_\e =u_\e -u_0 - \big( \Phi_{\e, k} -x_k \big) \frac{\partial u_0}{\partial x_k},
\end{equation}
where  $u_0$ is  the solution of \eqref{BVP-0F}. 
Then for any $t\in (0, T]$,
\begin{equation}\label{main-estimate-1}
\aligned
 & \left(\int_\Omega \big( |\nabla w_\e (x, t)|^2 +|\partial_t w_\e(x, t) |^2 \big)\, dx \right)^{1/2}\\
& \qquad
\le C \Big\{ \|\mathcal{L}_\e (\varphi_{\e, 0}) -\mathcal{L}_0 (\varphi_0)\|_{H^{-1}(\Omega)}
+ \| \varphi_{ \e, 1 } -\varphi_1 \|_{L^2(\Omega)} \Big\} \\
&\qquad\qquad
 + C \e
\Big\{ \|\nabla^2 \varphi_0 \|_{L^2(\Omega)}
+ \|\nabla \varphi_1 \|_{L^2(\Omega)} \Big\}\\
&\qquad\qquad
+ C \e \sup_{t\in (0, T]}  \|  \nabla^2  u_0 (\cdot, t) \|_{L^2(\Omega)}\\
&\qquad\qquad
 + C \e  {T}  \sup_{t\in (0, T]}  \| |\partial_t \nabla^2 u_0 (\cdot, t) | +| \partial_t^2 \nabla u_0 (\cdot, t)| \|_{L^2(\Omega)},
\endaligned
 \end{equation}
 where $C$ depends only on $d$ and $\mu$.
\end{thm}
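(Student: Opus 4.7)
The plan is to run the standard $H^1\times L^2$ energy estimate for the wave operator $\partial_t^2+\mathcal{L}_\e$ on the corrected remainder $w_\e = u_\e - v_\e$, where $v_\e := u_0 + (\Phi_{\e,k}-x_k)\partial_k u_0$. Since $\Phi_{\e,k}=x_k$ on $\partial\Omega$ we have $w_\e=0$ on $S_T$, so the Dirichlet energy inequality for $\partial_t^2+\mathcal L_\e$ applies without boundary terms. Subtracting the equations for $u_\e$ and $u_0$ yields
\begin{equation*}
(\partial_t^2+\mathcal{L}_\e)w_\e \;=\; \bigl[\mathcal{L}_0 u_0 - \mathcal{L}_\e v_\e\bigr] \;-\; (\Phi_{\e,k}-x_k)\,\partial_k\partial_t^2 u_0,
\end{equation*}
and the left-hand side of (\ref{main-estimate-1}) is then bounded, up to a universal constant, by $\|\nabla w_\e(\cdot,0)\|_{L^2}+\|\partial_t w_\e(\cdot,0)\|_{L^2}$ plus a careful pairing of this forcing against $\partial_t w_\e$.

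For the initial velocity I would use the $L^\infty$ bound $\|\Phi_{\e,k}-x_k\|_{L^\infty(\Omega)}\le C\e$, which gives $\|\partial_t w_\e(\cdot,0)\|_{L^2}\le \|\varphi_{\e,1}-\varphi_1\|_{L^2}+C\e\|\nabla\varphi_1\|_{L^2}$. For $\nabla w_\e(\cdot,0)$, observe that $w_\e(\cdot,0)\in H^1_0(\Omega)$, so by coercivity $\|\nabla w_\e(\cdot,0)\|_{L^2}\le C\|\mathcal L_\e w_\e(\cdot,0)\|_{H^{-1}}$; the elliptic Dirichlet-corrector identity $\mathcal L_\e v_\e(\cdot,0)=\mathcal L_0\varphi_0+r_\e$ with $\|r_\e\|_{H^{-1}}\le C\e\|\nabla^2\varphi_0\|_{L^2}$ then produces the first two lines of (\ref{main-estimate-1}).

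For the bracket $\mathcal L_0 u_0-\mathcal L_\e v_\e$, using $\mathcal L_\e\Phi_{\e,k}=0$ together with the product rule to expand $\mathcal L_\e v_\e$ rewrites it as
\begin{equation*}
\bigl[\widehat{a}_{ik}-a_{ij}(x/\e)\partial_j\Phi_{\e,k}\bigr]\partial_i\partial_k u_0 \;+\; \partial_i\bigl[a_{ij}(x/\e)(\Phi_{\e,k}-x_k)\partial_j\partial_k u_0\bigr].
\end{equation*}
The second, divergence-form piece integrates by parts in $x$ against $\partial_t w_\e$ and contributes $C\e\sup_t\|\nabla^2 u_0\|_{L^2}$. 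The first piece is handled by the antisymmetric flux corrector, which writes $a_{ij}(y)(\delta_{jk}+\partial_j\chi_k(y))-\widehat a_{ik}$ as the divergence of a bounded periodic antisymmetric tensor, with $\partial_j\Phi_{\e,k}$ replaced by $\delta_{jk}+(\partial_j\chi_k)(x/\e)$ up to an $O(\e)$ boundary-layer error; a further integration by parts in $x$ gives the same $C\e\sup_t\|\nabla^2 u_0\|_{L^2}$ contribution.

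The main obstacle is the time-commutator $(\Phi_{\e,k}-x_k)\partial_k\partial_t^2 u_0$: a crude $L^1_t L^2_x$ estimate would cost a factor of $T$ times $\sup_t\|\partial_t^2\nabla u_0\|_{L^2}$ and miss the geometric-mean structure displayed in (\ref{main-estimate-1}). My plan is to integrate by parts in $t$ against $\partial_t w_\e$,
\begin{equation*}
\int_0^t\!\!\int_\Omega (\Phi_{\e,k}-x_k)\partial_k\partial_t^2 u_0\cdot\partial_t w_\e\,dxds
= \Bigl[\int_\Omega (\Phi_{\e,k}-x_k)\partial_k\partial_t u_0\cdot\partial_t w_\e\Bigr]_0^t
- \int_0^t\!\!\int_\Omega (\Phi_{\e,k}-x_k)\partial_k\partial_t u_0\cdot\partial_t^2 w_\e,
\end{equation*}
absorbing the boundary piece, which is $\lesssim \e\|\partial_t\nabla u_0(t)\|_{L^2}\|\partial_t w_\e(t)\|_{L^2}$, into the energy by Young's inequality. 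For the bulk piece, substitute $\partial_t^2 w_\e=-\mathcal L_\e w_\e+(\text{other RHS})$ and integrate by parts in $x$ to transfer one derivative onto $(\Phi_{\e,k}-x_k)\partial_t\partial_k u_0$; this generates precisely the $\partial_t\nabla^2 u_0$ and $\partial_t^2\nabla u_0$ norms appearing in the last line. Cauchy--Schwarz in $t$ then supplies the factor $\sqrt T$, and Young's inequality balances the remaining $\|\nabla w_\e\|_{L^2}$-type quantity against the energy, producing the $\|\cdot\|^{1/2}\|\cdot\|^{1/2}$ geometric-mean interpolation in the final term of (\ref{main-estimate-1}). The hard part is arranging these space--time integrations by parts so that no pairing coarser than $O(\e)$ survives and the final interpolation actually closes.
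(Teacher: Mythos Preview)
Your setup --- the energy identity for $w_\e$, the initial-data bounds, and the corrector algebra rewriting $\mathcal L_0 u_0-\mathcal L_\e v_\e$ --- matches the paper closely. The genuine gap is in your treatment of the time-commutator $(\Phi_{\e,k}-x_k)\partial_k\partial_t^2 u_0$ and, consequently, in how the geometric-mean factor $\sqrt{T}\,M_0^{1/2}M_1^{1/2}$ is produced.

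You propose to integrate this term by parts in $t$, then substitute $\partial_t^2 w_\e=-\mathcal L_\e w_\e+(\text{RHS})$ and integrate the $\mathcal L_\e w_\e$ piece by parts in $x$. But that last step transfers a gradient onto $(\Phi_{\e,k}-x_k)\partial_k\partial_t u_0$ and generates the term $(\partial_j\Phi_{\e,k}-\delta_{jk})\,\partial_k\partial_t u_0$. Under the hypotheses of the theorem no smoothness on $A$ is assumed, so $\nabla\Phi_\e$ is not bounded and $\|\nabla\Phi_\e-I\|$ is $O(1)$, not $O(\e)$; the $O(\e)$ budget is lost and the estimate does not close. (The Caccioppoli device that rescues the analogous $I_3$-term works only for the specific combination $\Phi_\e-x-\e\chi(x/\e)$, which is $\mathcal L_\e$-harmonic; it does not apply to $\Phi_\e-x$.) The same problem reappears when you pair $(\Phi_{\e,k}-x_k)\partial_k\partial_t u_0$ with the divergence-form pieces of the substituted RHS.

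The paper obtains the interpolation by an entirely different, more elementary mechanism. It does \emph{not} integrate the $I_4$-term by parts at all: a direct Cauchy--Schwarz, combined with integration by parts in $x$ and then in $t$ for the divergence-form pieces (which is where the $\|\nabla^2 u_0\|$ boundary-in-time terms actually come from), yields on any subinterval $[t_0,t_0+\delta]$
\[
E_\e(t;w_\e)\ \le\ E_\e(t_0;w_\e)+C\e\,(\delta M_1+M_0)\sup_{[t_0,t_0+\delta]}E_\e(\cdot;w_\e)^{1/2},
\]
with $M_0=\sup_t\|\nabla^2 u_0\|_{L^2}$ and $M_1=\sup_t\||\partial_t\nabla^2 u_0|+|\partial_t^2\nabla u_0|\|_{L^2}$. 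One then partitions $[0,T]$ into $n$ steps of length $\delta=T/n$, iterates this local estimate (using $\delta_0=1/(2n)$ in Young's inequality so that $(1-\delta_0)^{-n}$ stays bounded), and optimizes by choosing $\delta\sim M_0/M_1$. This gives $\sup_t E_\e\le C\,E_\e(0)+C\e^2\bigl(M_0^2+T M_0M_1\bigr)$, whose square root is exactly the right-hand side of (\ref{main-estimate-1}). The geometric mean thus arises from the subdivide--iterate--optimize argument, not from any further space--time integration by parts.
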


Theorem \ref{main-theorem-1},
together with Rellich identities,
allows us to control the boundary integral 
$$
\int_0^T\!\!\!\int_{\partial\Omega}
|\nabla u_\e - (\nabla \Phi_\e) (\nabla u_0)|^2\, d\sigma dt,
$$
where the initial data $(\varphi_0, \varphi_1)$ in (\ref{BVP-0F})
is chosen so that $\mathcal{L}_0 (\varphi_0)=\mathcal{L}_\e (\varphi_{\e, 0})$ and 
$\varphi_1 =\varphi_{\e, 1}$ in $\Omega$
(see \cite{PP-2006} for the case $d=1$).
Since $|\nabla \Phi_\e|\le C$ \cite{AL-1987} and $|\text{det}(\nabla \Phi_\e)|\ge c>0$ on $\partial\Omega$ \cite{KLS-eigenvalue},
this reduces the problem to the estimates (\ref{upper-b}) and (\ref{lower-b})
for the homogenized operator $\partial_t^2 +\mathcal{L}_0$ with constant coefficients.
We remark that the Rellich identities, which use  the Lipschitz condition \eqref{Lip-C},
are applied to the function $w_\e$ in \eqref{w-0}.
We  further point out  that the power of $\e$ in the condition $N\le C_0T^{-1} \e^{-1/2}$
is dictated by the highest-order term in the right-hand side of (\ref{main-estimate-1}).
Also, the $C^3$ condition on $\Omega$ is only used for estimates of the homogenized solutions.

The problem of convergence rates is
of much interest in its own right in the theory of  homogenization. 
Note that no smoothness condition on $A$ is needed in Theorem \ref{main-theorem-1}.
Let $w_\e$ be given by (\ref{w-0}).
Since $\|\Phi_{\e} -x \|_{L^\infty(\Omega)}\le C \e$,
it follows that
\begin{equation}\label{u-t}
|\partial_t u_\e -\partial_t u_0|
\le |\partial_t w_\e|
+C \e |\partial_t \nabla u_0|,
\end{equation}
and
\begin{equation}\label{n-u}
| \nabla u_\e   - (\nabla \Phi_{\e}) (\nabla u_0)| 
\le |\nabla w_\e| + C \e |\nabla^2 u_0|,
\end{equation}
where $C$ depends only on $d$ and $\mu$.
As a result,
Theorem \ref{main-theorem-1} gives the $O(\e)$ convergence rates for both
 $\|\partial_t u_\e - \partial_t u_0\|_{L^2(\Omega)}$ and
$\|\nabla u_\e  - (\nabla \Phi_\e) (\nabla u_0) \|_{L^2(\Omega)} $.
By Sobolev imbedding, we may also deduce an $O(\e)$ convergence rate for
$\|u_\e (\cdot, t) -u_0 (\cdot,t)\|_{L^2(\Omega)}$ directly 
from (\ref{main-estimate-1}).
However, a better estimate with lower order derivatives required for $u_0$
is obtained at the end of Section \ref{section-3} (see (\ref{3-41})).
We mention that
in the case $\Omega=\mathbb{R}^d$,
the following estimate was proved in \cite{Suslina-2018-W} by M.A. Dorodnyi and T.A. Suslina,
\begin{equation}
\| u_\e (\cdot, t) -u_0 (\cdot, t) \|_{L^2(\mathbb{R}^d)}
\le C \e (t+1)
\Big\{ \|\varphi_0 \|_{H^{3/2}(\mathbb{R}^d)}
+ \| \varphi_1 \|_{H^{1/2} (\mathbb{R}^d)} \Big\}
\end{equation}
for any $t\in \mathbb{R}$,
where $(\partial_t^2 +\mathcal{L}_\e) u_\e
=(\partial_t^2 +\mathcal{L}_0) u_0=0$ 
in $\mathbb{R}^{d+1}$,
and $u_\e$ and $u_0$ have the same initial data $(\varphi_0, \varphi_1)$.
The results in \cite{Suslina-2018-W} (also see \cite{Suslina-2017}) 
 are obtained by an operator-theoretic approach,
using the Floquet-Bloch theory.
In the case of bounded domains, for a periodic hyperbolic system,
 Yu. M. Meshkova obtained an $O(\e)$ estimate for
$ \| u_\e (\cdot, t) -u_0 (\cdot, t) \|_{L^2(\Omega)}$, assuming the initial data $(\varphi_0, \varphi_1)$
belong to some subspace of $H^4(\Omega)$ \cite{Meshkova-2018}.
We note that the highest-order term in the right-hand side of (\ref{3-41})
involves $\|\varphi_0\|_{H^2(\Omega)} $
and $\| \varphi_1 \|_{H^1(\Omega)}$.

We point out  that the symmetry condition \eqref{symmetry} is essential in the proofs of Theorems \ref{main-theorem-2} and
\ref{main-theorem-1}, but the assumption that equations are scalar is not.
Theorem \ref{main-theorem-2} continues to hold for elliptic systems $\partial_t - \text{\rm div} (A(x/\e)\nabla )$,
 if $A(y) =(a_{ij}^{\alpha\beta} (y) )$, 
with $1\le i, j \le d$ and $1\le \alpha, \beta \le m$, 
satisfies the ellipticity condition \eqref{ellipticity} for $\xi =(\xi_i^\alpha)\in \mathbb{R}^{m\times d}$, the periodicity condition  \eqref{periodicity}, 
the Lipschitz condition \eqref{Lip-C},  
and the symmetry condition 
$a_{ij}^{\alpha\beta} =a_{ji}^{\beta \alpha}$.
In the case of Theorem \ref{main-theorem-1}, the estimate \eqref{main-estimate-1} holds in a $C^{1, \eta}$ domain $\Omega$,  if
$A$ satisfies \eqref{ellipticity}, \eqref{periodicity}, the symmetry condition above, and is H\"older continuous.
The additional smoothness conditions on $A$ and $\Omega$ are used for the estimates of correctors $\chi$ and $\Phi_\e$.

The summation convention that repeated indices are summed is used throughout the paper.
Finally, we thank Mathias Sch\"affner, who pointed out a flaw in the previous version of this paper. 



\section{Preliminaries}\label{section-2}

Throughout this section we will assume that $A=A(y)$ satisfies conditions 
(\ref{ellipticity}), (\ref{symmetry}) and (\ref{periodicity}).
A function $u$ in $ \mathbb{R}^d$  is said to be 1-periodic if $u(y+z)=u(y)$ for a.e.
$y\in \mathbb{R}^d$ and for any  $z\in \mathbb{Z}^d$.
Let  $\mathbb{T}^d =\mathbb{R}^d/\mathbb{Z}^d \cong [0, 1)^d$.
We use $H^1 (\mathbb{T}^d)$ to denote the closure of 1-periodic $C^\infty$ functions in $\mathbb{R}^d$ in the space
$H^1(Y)$, where $Y=(0, 1)^d$.

Let $\chi (y) =(\chi_1 (y), \chi_2 (y), \dots, \chi_d (y))$ 
denote the first-order corrector for $\mathcal{L}_\e$,
where, for $1\le j\le d$,  the function $\chi_j=\chi_j (y)$ is the unique 
weak solution in $H^1(\mathbb{T}^d) $ of the cell problem,
\begin{equation}\label{cell}
\left\{
\aligned
& -\text{\rm div} \big( A(y)\nabla \chi_j \big) =\text{\rm div} \big(A(y) \nabla y_j\big) \quad \text{ in } \mathbb{T}^d,\\
&\int_{\mathbb{T}^d} \chi_j \, dy=0.
\endaligned
\right.
\end{equation}
Note that $\chi_j$ is 1-periodic and
\begin{equation}
\mathcal{L}_\e \big\{ x_j +\e \chi_j (x/\e) \big\}  =0  \quad \text{ in } \mathbb{R}^d.
\end{equation}
By the classical De Giorgi - Nash estimate,
$\chi_j \in L^\infty (\mathbb{R}^d)$ and  $\|\chi_j\|_\infty\le C$,
where $C$ depends only on $d$ and $\mu$. 
Let 
\begin{equation}\label{L-0}
\mathcal{L}_0 =-\text{\rm div} \big( \widehat{A}\nabla \big),
\end{equation}
where $\widehat{A} =\big(\widehat{a}_{ij} \big)_{d\times d}$ and 
\begin{equation}\label{homo-c}
\widehat{a}_{ij} 
=\int_{\mathbb{T}^d}
\left( a_{ij} + a_{ik} \frac{\partial \chi_j}{\partial y_k } \right) dy
\end{equation}
(the summation convention is used).
Under the conditions (\ref{ellipticity}), (\ref{symmetry}) and (\ref{periodicity}),
one may show that the matrix $\widehat{A}$ is symmetric and satisfies the ellipticity condition,
\begin{equation}\label{ellipticity-0}
\mu |\xi|^2 \le \langle \widehat{A} \xi, \xi \rangle \le \frac{1}{\mu} |\xi|^2 \quad \text{ for any } \xi \in \mathbb{R}^d,
\end{equation}
with the same constant $\mu$ as in (\ref{ellipticity}). 
It is well known that the homogenized operator for $\partial_t^2 +\mathcal{L}_\e$ is
given by $\partial_t^2 +\mathcal{L}_0$.
In particular,
if $\varphi_{\e, 0} =\varphi_0$ and $\varphi_{\e, 1} =\varphi_1$,
 the solution $u_\e$ of the initial-Dirichlet problem (\ref{BVP-F})
converges strongly in $L^2(\Omega_T)$ to the solution $u_0$ of the homogenized  problem (\ref{BVP-0F}).

For $1\le i, j\le d$, let
\begin{equation}\label{b}
b_{ij} =a_{ij} + a_{ik} \frac{\partial \chi_j}{\partial y_k} -\widehat{a}_{ij}.
\end{equation}
It follows by the definitions of $\chi_j$ and $\widehat{a}_{ij}$ that
\begin{equation}\label{b-p}
\frac{\partial}{\partial y_i} b_{ij} =0 \quad \text{ and } \quad
\int_{\mathbb{T}^d} b_{ij}\, dy=0.
\end{equation}

\begin{lemma}\label{lemma-2.1}
There exist 1-periodic functions $\phi_{kij} (y)$ in $H^1(\mathbb{T}^d)$  for
$1\le i, j, k\le d$ such that
$\int_{\mathbb{T}^d} \phi_{kij} \, dy=0$, 
\begin{equation}\label{b-phi}
b_{ij} =\frac{\partial}{\partial y_k} \phi_{kij} 
\quad \text{ and } \quad
\phi_{kij} = -\phi_{ikj}.
\end{equation}
Moreover, $\phi_{kij} \in L^\infty(\mathbb{R}^d)$ and
$\| \phi_{kij} \|_\infty \le C$, where $C$ depends only on $d$ and $\mu$.
\end{lemma}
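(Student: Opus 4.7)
The plan is to construct the flux correctors $\phi_{kij}$ by solving an auxiliary periodic Poisson problem and then antisymmetrizing, in the style of Jikov--Kozlov--Oleinik. First I would record the two key properties of the vector field $b_{\cdot j}$ already stated in (\ref{b-p}): for each fixed $j$, the components $b_{\cdot j}$ are $1$-periodic, divergence-free in $y$, and have zero mean on $\mathbb{T}^d$. The zero-mean property is what makes the next step solvable, and the divergence-free property is what will be responsible for the desired relation $\partial_k\phi_{kij}=b_{ij}$.

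Next, for each pair $i,j$, I would solve the periodic Poisson problem
\begin{equation*}
\Delta N_{ij}=b_{ij}\quad\text{in }\mathbb{T}^d,\qquad \int_{\mathbb{T}^d}N_{ij}\,dy=0.
\end{equation*}
Since $b_{ij}\in L^2(\mathbb{T}^d)$ has mean zero, Lax--Milgram on the mean-zero subspace of $H^1(\mathbb{T}^d)$ gives a unique solution $N_{ij}\in H^2(\mathbb{T}^d)$. I would then \emph{define}
\begin{equation*}
\phi_{kij}:=\frac{\partial N_{ij}}{\partial y_k}-\frac{\partial N_{kj}}{\partial y_i}.
\end{equation*}
Antisymmetry $\phi_{kij}=-\phi_{ikj}$ and $\int_{\mathbb{T}^d}\phi_{kij}\,dy=0$ are immediate from this definition. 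For the divergence identity I would compute
\begin{equation*}
\frac{\partial}{\partial y_k}\phi_{kij}=\Delta N_{ij}-\frac{\partial}{\partial y_i}\Big(\frac{\partial N_{kj}}{\partial y_k}\Big)=b_{ij}-\frac{\partial}{\partial y_i}\Big(\frac{\partial N_{kj}}{\partial y_k}\Big),
\end{equation*}
and then observe that the scalar $g:=\partial_k N_{kj}$ is $1$-periodic and satisfies $\Delta g=\partial_k b_{kj}=0$ (in the distributional sense) by the divergence-free property of $b_{\cdot j}$; being a periodic harmonic function, $g$ is constant, so $\partial_i g=0$ and $\partial_k\phi_{kij}=b_{ij}$.

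The main obstacle is the uniform bound $\|\phi_{kij}\|_\infty\le C(d,\mu)$, since a priori we only have $b_{ij}\in L^2$ via $\chi_j\in H^1(\mathbb{T}^d)$, which is not enough to force $\nabla N_{ij}\in L^\infty$. To close this gap I would first improve integrability of the corrector by a Meyers-type argument applied to the cell problem (\ref{cell}): there exists $\delta_0=\delta_0(d,\mu)>0$ such that $\nabla\chi_j\in L^{2+\delta_0}(\mathbb{T}^d)$ with norm controlled by $\mu$, so that $b_{ij}\in L^{2+\delta_0}(\mathbb{T}^d)$. Feeding this back into the Poisson equation and using $W^{2,p}$ regularity on the torus yields $\nabla N_{ij}\in L^{p}$ with a Sobolev-improved exponent. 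For $d=2$ this is already past the embedding threshold and gives $\phi_{kij}\in L^\infty$ directly. For $d\ge 3$ I would either iterate the Meyers / Sobolev step, or---more cleanly---represent $\phi_{kij}$ as a singular-integral convolution against $b_{ij}$ and $b_{kj}$ via the periodic Green's function of $-\Delta$, exploiting the antisymmetric cancellation in $i\leftrightarrow k$ together with the algebraic identity $b_{ij}=\partial_k\phi_{kij}$ to get a self-improving estimate controlled by $\|b\|_\infty$-free quantities depending only on $d$ and $\mu$. The $L^\infty$ bound is therefore the delicate step; the rest of the construction is algebraically forced.
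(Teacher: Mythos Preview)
Your construction of $\phi_{kij}$ via the auxiliary periodic Poisson problems $\Delta N_{ij}=b_{ij}$ and the antisymmetrization $\phi_{kij}:=\partial_k N_{ij}-\partial_i N_{kj}$ is exactly the standard Jikov--Kozlov--Oleinik construction that the paper invokes by citing \cite{KLS-2014}; the paper gives no argument of its own.  Your derivation of antisymmetry, mean zero, and the identity $\partial_k\phi_{kij}=b_{ij}$ (via the observation that $g=\partial_k N_{kj}$ is periodic, harmonic, hence constant) is correct and is precisely the mechanism used in the reference.

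The genuine gap is the $L^\infty$ bound in dimensions $d\ge 3$, which you rightly flag as the delicate step but do not actually close.  The Meyers estimate yields $\nabla\chi_j\in L^{2+\delta_0}$ for one \emph{fixed} exponent $\delta_0=\delta_0(d,\mu)$, hence $b_{ij}\in L^{2+\delta_0}$, $N_{ij}\in W^{2,2+\delta_0}$, and $\phi_{kij}\in W^{1,2+\delta_0}$.  For $d=2$ this embeds into $L^\infty$ and you are done, as you note.  For $d\ge 3$ there is no ``iteration'': the integrability of $b_{ij}$ is pinned at $L^{2+\delta_0}$ because it is determined by $\nabla\chi_j$, and nothing in your scheme feeds back to improve $\nabla\chi_j$---Meyers is a one-shot self-improvement coming from Gehring's lemma, not a bootstrap.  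Your singular-integral alternative fares no better: $\phi_{kij}=\partial_k(-\Delta)^{-1}b_{ij}-\partial_i(-\Delta)^{-1}b_{kj}$ is an order $-1$ operator applied to $b$, and Hardy--Littlewood--Sobolev maps $L^{2+\delta_0}$ only into $L^{d(2+\delta_0)/(d-2-\delta_0)}$, not $L^\infty$; the antisymmetry in $i\leftrightarrow k$ does not lower the order of the operator, so the ``self-improving estimate'' you allude to has no content as stated.  The argument in \cite{KLS-2014} uses instead the scalar De~Giorgi--Nash information $\|\chi_j\|_\infty\le C(d,\mu)$ directly (not merely the Meyers gradient bound) to control $\phi$; you should consult that reference for the missing step rather than attempt to push integrability of $\nabla\chi_j$.
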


\begin{proof}
See \cite[Remark2.1]{KLS-2014}.
\end{proof}

Let $\Phi_\e (x)$ be the Dirichlet corrector for $\mathcal{L}_\e$ in $\Omega$, defined by (\ref{DC}).
Since
\begin{equation}\label{L-phi}
\mathcal{L}_\e \big\{ \Phi_{\e, j} -x_j -\e \chi_j (x/\e) \big\} =0 
\quad \text{ in } \Omega,
\end{equation}
by the maximum principle,
$$
\| \Phi_{\e, j} -x_j -\e \chi_j (x/\e)\|_{L^\infty (\Omega)}
= \| \e \chi_j (x/\e)\|_{L^\infty(\partial\Omega)}.
$$
It follows that 
\begin{equation}
\|\Phi_{\e, j} -x_j \|_{L^\infty(\Omega)}
\le 2\e \|\chi_j \|_\infty\le C \e,
\end{equation}
where $C$ depends only on $d$ and $\mu$.
If $\Omega$ is a bounded $C^{1, \alpha}$ domain in $\mathbb{R}^d$ for some $\alpha>0$
and $A$ is H\"older continous,
by  the boundary Lipschitz estimate  for $\mathcal{L}_\e$ \cite{AL-1987}, we also have
\begin{equation}\label{Lip}
\|\nabla \Phi_{\e, j} \|_{L^\infty(\Omega)} 
\le C,
\end{equation}
where  $C$ depends only on $d$, $A$ and $\Omega$.

\begin{lemma}\label{lemma-2.2}
Suppose that
\begin{equation}\label{2.1-0}
(\partial_t^2 +\mathcal{L}_\e) u_\e
= (\partial_t^2 +\mathcal{L}_0) u_0
\quad \text{ in } \Omega \times (T_0, T_1).
\end{equation}
Let
\begin{equation}\label{w}
w_\e
=u_\e -u_0 - \left(\Phi_{\e, k} -x_k\right) \frac{\partial u_0}{\partial x_k} .
\end{equation}
Then
\begin{equation}\label{L-w}
\aligned
(\partial_t^2 +\mathcal{L}_\e) w_\e
= & -\e \frac{\partial}{\partial x_i}
\left\{ \phi_{kij} (x/\e) \frac{\partial^2 u_0}{\partial x_k \partial x_j} \right\}\\
& +\frac{\partial}{\partial x_i}
\left\{ a_{ij}(x/\e) \big[ \Phi_{\e, k} -x_k\big]  \frac{\partial^2 u_0}{\partial x_j \partial x_k} \right\}\\
& +a_{ij}(x/\e) \frac{\partial}{\partial x_j}
\Big[ \Phi_{\e, k} -x_k -\e \chi_k (x/\e) \Big] \frac{\partial^2 u_0}{\partial x_i \partial x_k} \\
& - \big( \Phi_{\e,k} -x_k \big) \partial_t^2 \frac{\partial u_0}{\partial x_k}.
\endaligned
\end{equation}
\end{lemma}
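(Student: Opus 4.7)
The plan is a direct computation. Setting $V_k=\Phi_{\e,k}-x_k$, one applies $\partial_t^2+\mathcal{L}_\e$ to $w_\e=u_\e-u_0-V_k\partial_k u_0$ and uses the hypothesis (\ref{2.1-0}) to reduce to
\[
(\partial_t^2+\mathcal{L}_\e)w_\e = \mathcal{L}_0 u_0 - \mathcal{L}_\e u_0 - \mathcal{L}_\e\bigl[V_k\,\partial_k u_0\bigr] - V_k\,\partial_t^2\partial_k u_0.
\]
Expanding $\mathcal{L}_\e[V_k\partial_k u_0]$ by the product rule and splitting $\partial_j V_k=\partial_j\Phi_{\e,k}-\delta_{jk}$, the $\delta_{jk}$ contribution is exactly $+\mathcal{L}_\e u_0$ and cancels the corresponding term, leaving
\[
(\partial_t^2+\mathcal{L}_\e)w_\e = \mathcal{L}_0 u_0 + \partial_i\bigl[a_{ij}(x/\e)\,\partial_j\Phi_{\e,k}\,\partial_k u_0\bigr] + \partial_i\bigl[a_{ij}(x/\e)\,V_k\,\partial_j\partial_k u_0\bigr] - V_k\,\partial_t^2\partial_k u_0.
\]
The middle and last terms already match two lines of (\ref{L-w}).

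The substance of the lemma is to reshape $\mathcal{L}_0 u_0+\partial_i[a_{ij}(x/\e)\partial_j\Phi_{\e,k}\,\partial_k u_0]$ into the remaining two lines. I would use the two-scale splitting $\Phi_{\e,k}=x_k+\e\,\chi_k(x/\e)+Z_k$, where $Z_k=\Phi_{\e,k}-x_k-\e\,\chi_k(x/\e)$ satisfies $\mathcal{L}_\e Z_k=0$ in $\Omega$ by (\ref{L-phi}). Differentiating and subtracting $\widehat{a}_{ik}$ yields
\[
a_{ij}(x/\e)\,\partial_j\Phi_{\e,k} - \widehat{a}_{ik}
= \Bigl[a_{ik}(x/\e)+a_{ij}(x/\e)(\partial_j\chi_k)(x/\e)-\widehat{a}_{ik}\Bigr] + a_{ij}(x/\e)\,\partial_j Z_k,
\]
and, by Lemma \ref{lemma-2.1} together with the chain-rule identity $(\partial_{y_m}\phi_{mik})(x/\e)=\e\,\partial_{x_m}[\phi_{mik}(x/\e)]$, the bracket equals $\e\,\partial_{x_m}[\phi_{mik}(x/\e)]$. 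Writing $\mathcal{L}_0 u_0=-\partial_i[\widehat{a}_{ik}\,\partial_k u_0]$ and combining produces three summands:
\[
\e\,\partial_i\partial_m\!\bigl[\phi_{mik}(x/\e)\,\partial_k u_0\bigr]\,-\,\e\,\partial_i\!\bigl[\phi_{mik}(x/\e)\,\partial_m\partial_k u_0\bigr]\,+\,\partial_i\!\bigl[a_{ij}(x/\e)\,\partial_j Z_k\,\partial_k u_0\bigr].
\]

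The only non-routine point, and the main (mild) obstacle, is the first summand. Setting $f_{mi}=\phi_{mik}(x/\e)\,\partial_k u_0$, the antisymmetry $\phi_{mik}=-\phi_{imk}$ from Lemma \ref{lemma-2.1} makes $f_{mi}$ antisymmetric in $(m,i)$ for each fixed $k$, so the double divergence $\sum_{i,m}\partial_i\partial_m f_{mi}$ vanishes by a dummy-index swap. The second summand becomes $-\e\,\partial_i[\phi_{kij}(x/\e)\,\partial_k\partial_j u_0]$ after relabelling, matching the first line on the right of (\ref{L-w}). In the third summand, distribute the outer $\partial_i$; since $\partial_i[a_{ij}(x/\e)\,\partial_j Z_k]=-\mathcal{L}_\e Z_k=0$ in $\Omega$, only $a_{ij}(x/\e)\,\partial_j[\Phi_{\e,k}-x_k-\e\,\chi_k(x/\e)]\,\partial_i\partial_k u_0$ survives, which is the third line on the right of (\ref{L-w}). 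Collecting the four pieces then reproduces (\ref{L-w}) exactly; beyond the single antisymmetry cancellation, everything is index bookkeeping.
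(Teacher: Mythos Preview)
Your proof is correct and follows essentially the same route as the paper's. Both arguments hinge on the flux-corrector antisymmetry $\phi_{kij}=-\phi_{ikj}$ from Lemma~\ref{lemma-2.1} and on $\mathcal{L}_\e Z_k=0$ from (\ref{L-phi}); the only organizational difference is that the paper first combines terms into $\partial_i[b_{ij}(x/\e)\partial_j u_0]$, uses $\partial_{y_i}b_{ij}=0$ to strip one derivative, and then applies antisymmetry, whereas you keep the outer $\partial_i$ and kill the double divergence $\partial_i\partial_m[\phi_{mik}(x/\e)\partial_k u_0]$ directly by antisymmetry --- the same cancellation seen from two sides.
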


\begin{proof}
Note that by (\ref{2.1-0}),
$$
\aligned
(\partial_t^2 +\mathcal{L}_\e) w_\e
 &=  (\mathcal{L}_0 -\mathcal{L}_\e ) u_0 
-\mathcal{L}_\e \Big\{ (\Phi_{\e, k} -x_k) \frac{\partial u_0}{\partial x_k} \Big\}
- \big( \Phi_{\e,k} -x_k \big) \partial_t^2 \frac{\partial u_0}{\partial x_k}\\
& =\frac{\partial}{\partial x_i} 
\left\{ b_{ij} (x/\e) \frac{\partial u_o}{\partial x_j} \right\}
+\frac{\partial}{\partial x_i}
\left\{ a_{ij} (x/\e) \frac{\partial}{\partial x_j}
\Big[ \Phi_{\e, k} -x_k -\e \chi_k (x/\e) \Big] \frac{\partial u_0}{\partial x_k} \right\}\\
 & \qquad + \frac{\partial }{\partial x_i}
\left\{ a_{ij} (x/\e) \big[ \Phi_{\e, k} -x_k \big] \frac{\partial^2 u_0}{\partial x_j \partial x_k} \right\}
 - \big( \Phi_{\e,k} -x_k \big) \partial_t^2 \frac{\partial u_0}{\partial x_k},
\endaligned
$$
where $b_{ij}(y) $ is given by (\ref{b}).
Since $\frac{\partial}{\partial y_i} b_{ij}=0$, we see that 
$$
\aligned
\frac{\partial}{\partial x_i} 
\left\{ b_{ij} (x/\e) \frac{\partial u_o}{\partial x_j} \right\}
 & = b_{ij} (x/\e) \frac{\partial^2 u_0}{\partial x_i \partial x_j}\\
&=-\e \frac{\partial}{\partial x_i}
\left\{ \phi_{kij} (x/\e) \frac{\partial^2 u_0}{\partial x_k \partial x_j} \right\},
\endaligned
$$
where we have used (\ref{b-phi}) for the last step.
Finally, in view of (\ref{L-phi}), we have
$$
\aligned
 & \frac{\partial}{\partial x_i}
\left\{ a_{ij} (x/\e) \frac{\partial}{\partial x_j}
\Big[ \Phi_{\e, k} -x_k -\e \chi_k (x/\e) \Big] \frac{\partial u_0}{\partial x_k} \right\}\\
& \qquad
=
 a_{ij} (x/\e) \frac{\partial}{\partial x_j}
\Big[ \Phi_{\e, k} -x_k -\e \chi_k (x/\e) \Big] \frac{\partial^2 u_0}{\partial x_i \partial x_k}.
\endaligned
$$
This completes the proof.
\end{proof}

We end this section with well known energy estimates for
the initial-Dirichlet problem,

\begin{equation}\label{BVP-1}
\left\{
\aligned
 &(\partial_t^2 +\mathcal{L}_0 )u_0   =0  \quad  \text{ in } \Omega_T,\\
 &  u_0 =0  \quad \text{ on }  S_T,\\
  & u_0 (x, 0)  =\varphi_0 (x), \quad \partial_t u_0 (x, 0)=\varphi_1 (x) \quad \text{ for } x\in \Omega.
  \endaligned
  \right.
\end{equation}
Let $\Omega$ be a bounded domain  in $\mathbb{R}^d$.
Given $\varphi\in H_0^1(\Omega)$ and $\varphi_1 \in L^2(\Omega)$,
the evolution problem (\ref{BVP-1}) has a unique solution in $u_0\in L^\infty(0, T; H_0^1(\Omega))$ with
$\partial_t  u_0 \in L^\infty (0, T; L^2(\Omega))$. 
Moreover, the solution
satisfies 
\begin{equation}\label{e-21}
\|\nabla u_0 (\cdot, t)\|_{L^2(\Omega)}
+ \|\partial_t u_0 (\cdot, t)\|_{L^2(\Omega)}
\le C \Big\{
\|\nabla \varphi_0\|_{L^2(\Omega)}
+ \| \varphi_1 \|_{L^2(\Omega)} \Big\}
\end{equation}
for any $t\in (0, T]$,
where $C$ depends only on $d$ and $\mu$.
Let $\{ \lambda_{0, k}, k=1, 2, \dots \}$ denote the sequence of eigenvalues for $\mathcal{L}_0$ in 
$\Omega$ in an increasing order.
Let $\{ \psi_{0, k} \}$ be a set of eigenfunctions in $H^1_0(\Omega)$ for $\mathcal{L}_0$ in
$\Omega$ such that $\{ \psi_{0, k} \}$ forms  an orthonormal basis for $L^2(\Omega)$
and $\mathcal{L}_{\e, 0} (\psi_{0, k}) =\lambda_{0, k} \psi_{0, k}$ in $\Omega$.
Suppose that
$$
\varphi_{0} =\sum_k a_k \psi_{0, k} 
\quad
\text{ and }
\quad
\varphi_1 =\sum_k b_k \psi_{0, k},
$$
where $a_k, b_k \in \mathbb{R}$.
Then the solution of (\ref{BVP-1}) is given 
\begin{equation}\label{exp}
u_0 (x, t)
=\sum_k 
\Big\{
a_k \cos (\sqrt{\lambda_{0, k}} t)
+ b_k \lambda_{0, k}^{-1/2}
\sin(\sqrt{\lambda_{0, k}} t) \Big\} \psi_{0, k} (x).
\end{equation}
It follows that
\begin{equation}\label{e-220}
\aligned
 & \|\mathcal{L}_0( u_0)  (\cdot, t)\|_{L^2(\Omega)}
+ \|\partial_t \nabla u_0 (\cdot, t)\|_{L^2(\Omega)}
+\|\partial^2_t  u_0(\cdot, t) \|_{L^2(\Omega)}\\
& \qquad\qquad 
\le C \Big\{
\|\mathcal{L}_0 ( \varphi_0) \|_{L^2(\Omega)}
+ \| \nabla \varphi_1 \|_{L^2(\Omega)} \Big\}
\endaligned
\end{equation}
for any $t\in (0, T]$, where $C$ depends only on $d$ and $\mu$.

If $\Omega$ is a bounded $C^{1, 1}$ domain,
 $\varphi_0 \in H^2(\Omega)\cap H_0^1(\Omega)$ and  $\varphi_1\in H_0^1(\Omega)$,
one may use the $H^2$ estimate for the elliptic operator $\mathcal{L}_0$,
$$
\|\nabla^2 u  \|_{L^2(\Omega)} \le C \|\mathcal{L}_0 (u)\|_{L^2(\Omega)}\quad
\text{  for } 
 u\in H_0^1(\Omega) \cap H^2(\Omega),
 $$
 and (\ref{e-220})  to show that
\begin{equation}\label{e-22}
  \|\nabla^2 u_0 (\cdot, t)\|_{L^2(\Omega)}
\le C \Big\{
\|\mathcal{L}_0 ( \varphi_0) \|_{L^2(\Omega)}
+ \| \nabla \varphi_1 \|_{L^2(\Omega)} \Big\}
\end{equation}
for any $t\in (0, T]$, where $C$ depends only on $d$, $\mu$, and $\Omega$.
Furthermore, if $\Omega$ is a bounded $C^3$ domain, 
$\varphi_0 \in H^3(\Omega)\cap H_0^1(\Omega)$ and
$\varphi_1\in H^2(\Omega) \cap H_0^1(\Omega)$, we have 
\begin{equation}\label{e-23}
\aligned
 & \|\nabla^3 u_0 (\cdot, t)\|_{L^2(\Omega)}
+ \|\partial_t \nabla^2  u_0 (\cdot, t)\|_{L^2(\Omega)}
+\|\partial^2_t  \nabla u_0(\cdot, t) \|_{L^2(\Omega)}
+\|\partial_t^3 u_0(\cdot, t) \|_{L^2(\Omega)}  \\
& \qquad\qquad 
\le C \Big\{
\|\mathcal{L}_0 ( \varphi_0) \|_{H^1(\Omega)}
+ \| \mathcal{L}_0 ( \varphi_1)  \|_{L^2(\Omega)} \Big\}
\endaligned
\end{equation}
for any $t\in (0, T]$.



\section{Convergence rates}\label{section-3}

Throughout this section we assume that 
$A=A(y)$ satisfies (\ref{ellipticity}), (\ref{symmetry}) and (\ref{periodicity}).
No additional smoothness condition on $A$ is needed.

For a function $w$ in $\Omega \times [T_0, T_1]$, 
we introduce  the energy  functional,
\begin{equation}\label{energy}
E_\e (t; w)
=\frac 12
 \int_\Omega 
 \Big\{
 \langle A(x/\e)\nabla w (x, t),  \nabla w  (x, t) \rangle
 + (\partial_t w  (x, t) )^2 \Big\} dx
 \end{equation}
 for $t\in  [T_0, T_1]$.
 
\begin{lemma}\label{lemma-2.3}
Let $u_\e$, $u_0$, and $w_\e$ be the same as in Lemma \ref{lemma-2.2}.
Also assume that $u_\e=u_0$ on $\partial\Omega \times [T_0, T_1]$.
 Then
 \begin{equation}\label{e-1}
 \aligned
&  | E_\e (T_1; w_\e)- E_\e (T_0; w_\e)|\\
& \le C \e
 \left(\int_{T_0}^{T_1}\! \!\!\int_\Omega
 \big( |\partial_t \nabla^2 u_0|
 +|\partial_t^2 \nabla u_0| \big)^2\, dx dt \right)^{1/2} \left(\int_{T_0}^{T_1} E_\e (t; w_\e)\, dt \right)^{1/2}\\
 &\qquad
+ C\e \|\nabla^2 u_0 (\cdot, T_1)\|_{L^2 (\Omega)} 
E_\e (T_1; w_\e)^{1/2}\\
&\qquad
+ C\e \|\nabla^2 u_0 (\cdot, T_0)\|_{L^2 (\Omega)} 
E_\e ( T_0; w_\e)^{1/2},
 \endaligned
 \end{equation}
 where $C$ depends only on $d$ and $\mu$.
\end{lemma}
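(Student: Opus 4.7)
The plan is to apply the standard energy multiplier for the wave operator to $w_\e$. The hypothesis $u_\e = u_0$ on $\partial\Omega \times [T_0,T_1]$, together with $\Phi_{\e,k}|_{\partial\Omega} = x_k$, forces $w_\e = 0$ on the lateral boundary and hence $\partial_t w_\e = 0$ there. Multiplying the equation $(\partial_t^2 + \mathcal{L}_\e) w_\e = F$ (with $F$ the right-hand side of Lemma \ref{lemma-2.2}) by $\partial_t w_\e$, using symmetry of $A$, and integrating over $\Omega$ gives $\frac{d}{dt}E_\e(t;w_\e) = \int_\Omega F\,\partial_t w_\e\,dx$; integrating in $t$ from $T_0$ to $T_1$ reduces the lemma to bounding $\int_{T_0}^{T_1}\!\!\int_\Omega F\,\partial_t w_\e\,dx\,dt$ by the right-hand side of \eqref{e-1}. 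I would then expand $F$ into the four summands appearing in \eqref{L-w} and estimate each one separately.

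Three of the four summands admit the same recipe: integrate by parts in $x$ (the boundary contribution vanishes because $\partial_t w_\e|_{\partial\Omega}=0$), then in $t$, converting $\partial_t\partial_i w_\e$ into endpoint terms plus an interior integral against $\partial_i w_\e$. For $-\e\,\partial_i\{\phi_{kij}(x/\e)\partial_k\partial_j u_0\}$, the explicit factor $\e$ and the uniform bound $\|\phi_{kij}\|_\infty \le C$ from Lemma \ref{lemma-2.1} produce an endpoint contribution $\le C\e\,\|\nabla^2 u_0(\cdot,T_j)\|_{L^2}\,E_\e(T_j;w_\e)^{1/2}$ at each $t=T_j$ and, by Cauchy--Schwarz, a bulk contribution $\le C\e\,(\int\!\!\int |\partial_t\nabla^2 u_0|^2)^{1/2}(\int E_\e\,dt)^{1/2}$. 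For $\partial_i\{a_{ij}(x/\e)(\Phi_{\e,k}-x_k)\partial_j\partial_k u_0\}$ the scheme is identical, the factor $\e$ now coming from the maximum-principle bound $\|\Phi_{\e,k}-x_k\|_\infty \le 2\e\,\|\chi_k\|_\infty \le C\e$ applied to $\Phi_{\e,k}-x_k-\e\chi_k(x/\e)$. For $-(\Phi_{\e,k}-x_k)\,\partial_t^2\partial_k u_0$, no integration by parts in $t$ is needed: a direct Cauchy--Schwarz with the same $L^\infty$ bound absorbs the contribution into the $|\partial_t^2\nabla u_0|$ part of the first bracket of \eqref{e-1}.

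The main obstacle is the remaining summand $a_{ij}(x/\e)\,\partial_j\eta_k\,\partial_i\partial_k u_0$, with $\eta_k := \Phi_{\e,k}-x_k-\e\chi_k(x/\e)$: it is not visibly in divergence form, and $\partial_j\eta_k$ is not pointwise small. The crucial point is the identity $\mathcal{L}_\e\eta_k = 0$ from \eqref{L-phi}, which makes the vector field $a_{ij}(x/\e)\partial_j\eta_k$ divergence-free in $i$, so this summand can be written distributionally as $\partial_i\{a_{ij}(x/\e)\partial_j\eta_k\cdot\partial_k u_0\}$ and handled by the same IBP scheme. To extract the factor of $\e$, I would decompose $a_{ij}(x/\e)\partial_j[\e\chi_k(x/\e)] = \widehat a_{ik}+b_{ik}(x/\e)-a_{ik}(x/\e)$ with $b_{ik}$ mean-zero and divergence-free in $i$, and use the flux-corrector representation $b_{ik}(x/\e) = \e\,\partial_{x_\ell}\{\phi_{\ell ik}(x/\e)\}$ from Lemma \ref{lemma-2.1}. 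A further integration by parts in $x_\ell$, exploiting the antisymmetry $\phi_{\ell ik} = -\phi_{i\ell k}$ together with the symmetry of mixed second-order partials of $u_0$, annihilates the spurious third-order-in-$u_0$ term that would otherwise arise; what survives carries a factor $\e$ with the same structure as the previous summands, while the remaining order-one pieces telescope against the output from the $(\Phi_{\e,k}-x_k)$ summand into contributions bearing the small factor $(\Phi_{\e,k}-x_k) = O(\e)$. Assembling the four estimates via Cauchy--Schwarz and Poincar\'e (using $w_\e\in H^1_0(\Omega)$ at each fixed $t$) yields \eqref{e-1} with constants depending only on $d$ and $\mu$, since only $\|\chi_k\|_\infty$, $\|\phi_{kij}\|_\infty$, and $\|\Phi_{\e,k}-x_k\|_\infty$ enter and each is controlled by these two parameters alone.
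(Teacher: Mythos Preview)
Your setup via the energy identity and your treatment of three of the four summands in \eqref{L-w}---the $\phi_{kij}$-term, the $(\Phi_{\e,k}-x_k)$-term in divergence form, and the $\partial_t^2\nabla u_0$-term---match the paper and are correct.

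The gap is in your handling of $I_3 = a_{ij}(x/\e)\,\partial_j\eta_k\,\partial_i\partial_k u_0$ with $\eta_k = \Phi_{\e,k}-x_k-\e\chi_k(x/\e)$. After your decomposition $a_{ij}\partial_j\eta_k = a_{ij}\partial_j\Phi_{\e,k} - \widehat{a}_{ik} - b_{ik}(x/\e)$, the flux-corrector trick indeed disposes of the $b_{ik}$-piece, but the residual $[a_{ij}(x/\e)\partial_j\Phi_{\e,k} - \widehat{a}_{ik}]\,\partial_i\partial_k u_0\,\partial_t w_\e$ is still order one, and there is nothing in $I_2$ for it to telescope against: $I_2$ already carries the small factor $(\Phi_{\e,k}-x_k)=O(\e)$ and produces no order-one output. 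Worse, since the lemma is stated without any smoothness on $A$, $\nabla\Phi_\e$ need not even be in $L^\infty$, so the residual is not controlled at all along this route.

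The paper handles $I_3$ differently and more simply: it integrates by parts \emph{in $t$ only}, producing endpoint and bulk terms each containing the product $\nabla\eta_k\cdot w_\e$. The factor $\e$ then comes from a Caccioppoli inequality: since $\mathcal{L}_\e\eta_k = 0$ in $\Omega$ and $w_\e(\cdot,t)\in H^1_0(\Omega)$,
\[
\|\nabla\eta_k\cdot w_\e(\cdot,t)\|_{L^2(\Omega)} \le C\,\|\eta_k\,\nabla w_\e(\cdot,t)\|_{L^2(\Omega)} \le C\e\,\|\nabla w_\e(\cdot,t)\|_{L^2(\Omega)},
\]
the last step by $\|\eta_k\|_\infty\le C\e$. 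This is the missing idea: the smallness is not pointwise in $\nabla\eta_k$ but in $L^2$ when weighted by an $H^1_0$ cutoff, and it is precisely the $\mathcal{L}_\e$-harmonicity of $\eta_k$ (the ``crucial point'' you identified) that makes the Caccioppoli step available.
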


\begin{proof}
Using the symmetry condition (\ref{symmetry}), we obtain 
\begin{equation}\label{2.3-1}
E_\e ( T_1; w_\e)-E_\e (T_0; w_\e)
 =\int_{T_0}^{T_1}\!\!\! 
 \langle
\left(\partial_t^2 +\mathcal{L}_\e\right) w_\e, \partial_t w_\e \rangle_{H^{-1}(\Omega) \times H^1_0(\Omega)}\,
 dt.\\
\end{equation}
We will use the formula (\ref{L-w}) for 
$(\partial_t^2 +\mathcal{L}_\e) w_\e $ 
 to bound the right-hand side of (\ref{2.3-1}).
 The fact $w_\e=0$ on $\partial \Omega \times [T_0, T_1]$ is also used.

Let $I_1$ denote the first term in the right-hand side of (\ref{L-w}).
It follows from integration by parts (first in $x$ and then in $t$) that
$$
\aligned
\Big| 
\int_{T_0}^{T_1}\! \!\! \langle 
I_1,  \partial_t w_\e \rangle_{H^{-1}(\Omega) \times H_0^1(\Omega)}  \, dt \Big|
&=\e \Big| \int_{T_0}^{T_1}\! \!\!\int_\Omega
\phi_{kij} (x/\e) \frac{\partial^2 u_0}{\partial x_k \partial x_j} \cdot \partial_t \frac{\partial w_\e}{\partial x_i}\, dx dt \Big|\\
& \le C \e  \int_{T_0}^{T_1}\! \!\!\int_\Omega
|\partial_t \nabla^2 u_0|\, |\nabla w_\e|\, dx dt\\
&\qquad + C \e \int_\Omega |\nabla^2 u_0 (x, T_1)|\, |\nabla w_\e (x, T_1)|\, dx\\
&\qquad+C \e \int_\Omega |\nabla^2 u_0 (x, T_0)|\, |\nabla w_\e (x, T_0)|\, dx.
\endaligned
$$
By the Cauchy inequality this leads to
\begin{equation}\label{2.3-2}
\aligned
\Big| \int_{T_0}^{T_1}\! \!\! 
\langle I_1,  \partial_t w_\e \rangle_{H^{-1}(\Omega) \times H^1_0(\Omega)}\,  dt \Big|
&\le C \e \|\partial_t \nabla^2 u_0\|_{L^2(\Omega\times (T_0, T_1))}
\left(\int_{T_0}^{T_1} E_\e (t; w_\e)\, dt \right)^{1/2}\\
&\qquad
+ C\e \|\nabla^2 u_0 (\cdot, T_1)\|_{L^2 (\Omega)} 
E_\e (T_1; w_\e)^{1/2}\\
&\qquad
+ C\e \|\nabla^2 u_0 (\cdot, T_0)\|_{L^2 (\Omega)} 
E_\e (T_0; w_\e)^{1/2},
\endaligned
\end{equation}
where $C$ depends only on $d$ and $\mu$.
Let $I_2$ denote the second term in the right-hand side of (\ref{L-w}).
Since $\| \Phi_{\e, k} -x_k \|_{L^\infty(\Omega)}
\le C \e$, it is easy to see that (\ref{2.3-2}) also holds with $I_2$ in the place of $I_1$.

Next, let $I_3$ denote the third term in the right-hand side of (\ref{L-w}).
Using  integration by parts in the $t$ variable, we see that
$$
\aligned
\Big| \int_{T_0}^{T_1}\! \!\! \int_\Omega
I_3 \cdot \partial_t w_\e\, dx dt \Big|
&\le 
C \int_{T_0}^{T_1}\!\!\!\int_\Omega
|\nabla \big[ \Phi_\e -x -\e \chi(x/\e)\big] | \, |\partial_t \nabla^2 u_0|\, | w_\e|\, dx dt\\
&\qquad
+C \int_\Omega |\nabla \big[  \Phi_\e - x -\e \chi(x/\e) \big] |\,  |\nabla^2 u_0 (x, T_1)|\, |w_\e(x, T_1)|\, dx\\
&\qquad
+C \int_\Omega |\nabla \big[ \Phi_\e - x -\e \chi(x/\e) \big] |\,  |\nabla^2 u_0 (x, T_0)|\, |w_\e(x, T_0)|\, dx.
\endaligned
$$
It follows from the Cauchy inequality that
$$
\aligned
\Big| \int_{T_0}^{T_1}\! \!\! \int_\Omega
I_3 \cdot \partial_t w_\e\, dx dt \Big|
&\le C \| \nabla \big[ \Phi_\e -x -\e \chi(x/\e)\big] w_\e\|_{L^2(\Omega\times (T_0, T_1))}
\|\partial_t \nabla^2 u_0\|_{L^2(\Omega \times (T_0, T_1))}\\
&\qquad +  C\| \nabla \big[ \Phi_\e -x -\e \chi(x/\e)\big] w_\e (\cdot, T_1) \|_{L^2(\Omega)}
\|\nabla^2 u_0 (\cdot, T_1)\|_{L^2(\Omega)}\\
&\qquad +   C \| \nabla \big[ \Phi_\e -x -\e \chi(x/\e)\big] w_\e (\cdot, T_0) \|_{L^2(\Omega)}
\|\nabla^2 u_0 (\cdot, T_0)\|_{L^2(\Omega)}.
\endaligned
$$
Since $\mathcal{L}_\e (\Phi_\e -x -\e \chi(x/\e))=0$ in $\Omega$ and
$w_\e=0$ on $\partial\Omega$,
by Caccioppoli's inequality, we have 
\begin{equation}\label{Ca}
\aligned
\| \nabla \big[ \Phi_\e -x -\e \chi(x/\e)\big] w_\e (\cdot, t) \|_{L^2(\Omega)}
&\le C \| \big[\Phi_\e -x -\e \chi(x/\e)\big] \nabla w_\e (\cdot, t) \|_{L^2(\Omega)}\\
&\le C \e \|\nabla w_\e (\cdot, t)\|_{L^2(\Omega)}
\endaligned
\end{equation}
for $t\in [T_0, T_1]$. 
As a result, the estimate (\ref{2.3-2}) continues to hold if we replace $I_1$ by $I_3$.
 
Finally, let $I_4$ denote the last term in the right-hand side of (\ref{L-w}).
By the Cauchy inequality, we obtain 
$$
\aligned
\Big| \int_{T_0}^{T_1}\! \!\! \int_\Omega
I_4 \cdot \partial_t w_\e\, dx dt \Big|
&\le C\e \|\partial_t^2 \nabla u_0\|_{L^2(\Omega\times (T_0, T_1))}
\|\partial_t w_\e \|_{L^2(\Omega\times (T_0, T_1))}\\
&\le C \e \|\partial_t^2 \nabla u_0\|_{L^2(\Omega\times (T_0, T_1))}
\left(\int_{T_0}^{T_1} E_\e (t; w_\e)\, dt \right)^{1/2}.
\endaligned
$$
This completes the proof of (\ref{e-1}).
\end{proof}

The next lemma gives an estimate of $E_\e (t; w_\e)$ for $t=0$.

\begin{lemma}\label{lemma-2.4}
Let  $w_\e$, $\varphi_{ \e, 0}$, $\varphi_0$, $\varphi_{ \e, 1}$ and $\varphi_1$ be the same as in Theorem \ref{main-theorem-1}.
Then
\begin{equation}\label{2.4-0}
\aligned
E_\e (0; w_\e)
&\le C \|\mathcal{L}_\e  (\varphi_{\e, 0}) -\mathcal{L}_0 (\varphi_0) \|^2_{H^{-1}(\Omega)}
+ C \|\varphi_{\e, 1} -\varphi_1\|^2_{L^2(\Omega)}\\
&\qquad
+  C \e^2 \Big\{ 
\|\nabla^2 \varphi_0\|_{L^2(\Omega)}^2
+ \|\nabla \varphi_1\|^2_{L^2(\Omega)} \Big\},
\endaligned
\end{equation}
where $C$ depends only on $d$ and $\mu$.
\end{lemma}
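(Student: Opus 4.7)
\medskip

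\noindent\textbf{Proof proposal.} At $t=0$ we have the concrete expressions
$$W_0:=w_\e(\cdot,0)=\varphi_{\e,0}-\varphi_0-(\Phi_{\e,k}-x_k)\tfrac{\partial\varphi_0}{\partial x_k}\in H^1_0(\Omega),
\qquad W_1:=\partial_t w_\e(\cdot,0)=\varphi_{\e,1}-\varphi_1-(\Phi_{\e,k}-x_k)\tfrac{\partial\varphi_1}{\partial x_k}.$$
Since $\Phi_{\e,k}-x_k=0$ on $\partial\Omega$, the function $W_0$ indeed lies in $H^1_0(\Omega)$. The $L^2$-norm of $W_1$ is immediate from the estimate $\|\Phi_{\e,k}-x_k\|_{L^\infty(\Omega)}\le C\e$ and the triangle inequality, producing the desired $\|\varphi_{\e,1}-\varphi_1\|_{L^2}^2+C\e^2\|\nabla\varphi_1\|_{L^2}^2$ contribution to $E_\e(0;w_\e)$. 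The work is therefore to control the gradient term $\int_\Omega\langle A(x/\e)\nabla W_0,\nabla W_0\rangle\,dx$.

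\medskip

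The plan is to apply the same algebraic identity used in Lemma \ref{lemma-2.2}, but in the time-independent setting. Running the computation of Lemma \ref{lemma-2.2} with the constant-in-$t$ data $u_\e\equiv\varphi_{\e,0}$ and $u_0\equiv\varphi_0$ (so the $\partial_t^2$ terms drop out while the discrepancy $\mathcal{L}_\e\varphi_{\e,0}-\mathcal{L}_0\varphi_0$ now appears), I obtain
$$\mathcal{L}_\e W_0=\big[\mathcal{L}_\e\varphi_{\e,0}-\mathcal{L}_0\varphi_0\big]-\e\tfrac{\partial}{\partial x_i}\{\phi_{kij}(x/\e)\tfrac{\partial^2\varphi_0}{\partial x_k\partial x_j}\}+\tfrac{\partial}{\partial x_i}\{a_{ij}(x/\e)[\Phi_{\e,k}-x_k]\tfrac{\partial^2\varphi_0}{\partial x_j\partial x_k}\}+a_{ij}(x/\e)\tfrac{\partial}{\partial x_j}[\Phi_{\e,k}-x_k-\e\chi_k(x/\e)]\tfrac{\partial^2\varphi_0}{\partial x_i\partial x_k}.$$
I then test this identity against $W_0\in H^1_0(\Omega)$ and use ellipticity to get $\mu\|\nabla W_0\|_{L^2}^2\le\langle\mathcal{L}_\e W_0,W_0\rangle_{H^{-1}\times H^1_0}$. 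The pairing with the first term is $\le\|\mathcal{L}_\e\varphi_{\e,0}-\mathcal{L}_0\varphi_0\|_{H^{-1}}\|\nabla W_0\|_{L^2}$. For the second and third terms, integration by parts in $x$ moves the divergence onto $W_0$; the bounds $\|\phi_{kij}\|_\infty\le C$ and $\|\Phi_{\e,k}-x_k\|_\infty\le C\e$ give $C\e\|\nabla^2\varphi_0\|_{L^2}\|\nabla W_0\|_{L^2}$ for each.

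\medskip

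The delicate step is the last term, which contains $\nabla(\Phi_{\e,k}-x_k-\e\chi_k(x/\e))$: since $\Omega$ is only Lipschitz and $A$ is only bounded measurable, I cannot use an $L^\infty$ bound on $\nabla\Phi_\e$. Instead I mimic the Caccioppoli argument used inside the proof of Lemma \ref{lemma-2.3}: setting $\widetilde\chi_{\e,k}:=\Phi_{\e,k}-x_k-\e\chi_k(x/\e)$, one has $\mathcal{L}_\e\widetilde\chi_{\e,k}=0$ in $\Omega$ by (\ref{L-phi}) and $\|\widetilde\chi_{\e,k}\|_\infty\le C\e$. Testing $\mathcal{L}_\e\widetilde\chi_{\e,k}=0$ against $\widetilde\chi_{\e,k}W_0^2\in H^1_0(\Omega)$ and absorbing yields
$$\|(\nabla\widetilde\chi_{\e,k})\,W_0\|_{L^2(\Omega)}\le C\|\widetilde\chi_{\e,k}\nabla W_0\|_{L^2(\Omega)}\le C\e\|\nabla W_0\|_{L^2(\Omega)},$$
exactly as in (\ref{Ca}). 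Cauchy--Schwarz then bounds this last pairing by $C\e\|\nabla^2\varphi_0\|_{L^2}\|\nabla W_0\|_{L^2}$ as well.

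\medskip

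Summing the four contributions, dividing through by $\|\nabla W_0\|_{L^2}$ and squaring gives $\|\nabla W_0\|_{L^2}^2\le C\|\mathcal{L}_\e\varphi_{\e,0}-\mathcal{L}_0\varphi_0\|_{H^{-1}}^2+C\e^2\|\nabla^2\varphi_0\|_{L^2}^2$; combining with the bound on $\|W_1\|_{L^2}^2$ produces (\ref{2.4-0}). The only real obstacle is the third correction term, whose divergence-free structure fails and whose leading factor $\nabla\Phi_\e$ is not controlled in $L^\infty$; the Caccioppoli inequality for the $\mathcal{L}_\e$-harmonic function $\widetilde\chi_{\e,k}$ is what rescues the argument without invoking any smoothness of $A$ or $\Omega$.
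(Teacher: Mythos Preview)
Your proposal is correct and follows essentially the same route as the paper's proof: the paper also splits $E_\e(0;w_\e)$ into the $\partial_t w_\e(\cdot,0)$ part (handled via $\|\Phi_{\e,k}-x_k\|_\infty\le C\e$) and the gradient part, derives the same formula (\ref{L-w-1}) for $\mathcal{L}_\e(w_\e)(\cdot,0)$, tests it against $w_\e(\cdot,0)$, and treats the term involving $\nabla[\Phi_{\e,k}-x_k-\e\chi_k(x/\e)]$ by the Caccioppoli inequality (\ref{Ca}) exactly as you describe.
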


\begin{proof}
Note that
$$
\aligned
\partial_t w_\e (x, 0) & =\partial_t u_\e (x, 0)- \partial _t u_0 (x, 0)- \big( \Phi_{\e, k} -x_k\big) \partial_t \frac{\partial u_0}{\partial x_k} (x, 0)\\
&= \varphi_{\e, 1} -\varphi_1 - \big( \Phi_{\e, k} - x_k \big) \frac{\partial \varphi_1}{\partial x_k}.
\endaligned
$$
It follows that
$$
\| \partial_t w_\e (\cdot, 0)\|_{L^2(\Omega)}
\le  \| \varphi_{\e, 1}-\varphi_1\|_{L^2(\Omega)}
+ C \e \|\nabla \varphi_1 \|_{L^2(\Omega)}.
$$
Next, to bound $\|\nabla w_\e (\cdot, 0)\|_{L^2(\Omega)}$,
we use
\begin{equation}\label{2.4-4}
\int_\Omega \langle A(x/\e)\nabla w_\e, \nabla w_\e \rangle \, dx
=\int_\Omega \langle \mathcal{L}_\e (w_\e),   w_\e\rangle_{H^{-1}(\Omega) \times H^1_0(\Omega)}
\, dx
\end{equation}
and the following  formula,
\begin{equation}\label{L-w-1}
\aligned
\mathcal{L}_\e  (w_\e) (x, 0)
&= \mathcal{L}_\e (\varphi_{\e, 0} )
-\mathcal{L}_0 (\varphi_0)
-\e \frac{\partial}{\partial x_i}
\left\{ \phi_{kij} (x/\e) \frac{\partial^2 \varphi_0}{\partial x_k\partial x_j} \right\}\\
&\qquad
+ \frac{\partial}{\partial x_i}
\left\{ a_{ij}(x/\e) \big[ \Phi_{\e, k} -x_k\big] \frac{\partial^2\varphi_0}{\partial x_j \partial x_k} \right\}\\
&\qquad
+a_{ij}(x/\e) 
\frac{\partial}{\partial x_j}
\big[ \Phi_{\e, k} -x_k -\e \chi_k (x/\e) \big] \frac{\partial^2\varphi_0}{\partial x_i \partial x_k}.
\endaligned
\end{equation}
The proof of (\ref{L-w-1}) is similar to that of (\ref{L-w}).
It follows from (\ref{2.4-4}) and (\ref{L-w-1}) that
$$
\aligned
\| \nabla w_\e (\cdot, 0)\|_{L^2(\Omega)}^2
&\le  C \| \mathcal{L}_\e (\varphi_{\e, 0}) -\mathcal{L}_0 (\varphi_0)\|_{H^{-1} (\Omega)}  \| \nabla w_\e (\cdot, 0)\|_{L^2(\Omega)}\\
&\qquad + C \e \|\nabla^2 \varphi_0\|_{L^2(\Omega)}
\|\nabla w_\e (\cdot, 0)\|_{L^2(\Omega)}\\
& \qquad
+ C\|\nabla [ \Phi_\e - x -\e \chi(x/\e)] w_\e (\cdot, 0) \|_{L^2 (\Omega)} 
\| \nabla^2 \varphi_0\|_{L^2(\Omega)}\\
&\le
C \| \mathcal{L}_\e (\varphi_{\e, 0}) -\mathcal{L}_0 (\varphi_0)\|_{H^{-1} (\Omega)}  \| \nabla w_\e (\cdot, 0)\|_{L^2(\Omega)}\\
&\qquad + C \e \|\nabla^2 \varphi_0\|_{L^2(\Omega)}
\|\nabla w_\e (\cdot, 0)\|_{L^2(\Omega)},
\endaligned
$$
where we have used the Caccioppoli's inequality (\ref{Ca})  for the last step.
This yields 
$$
\|\nabla w_\e (\cdot, 0)\|_{L^2(\Omega)}
\le C  \| \mathcal{L}_\e (\varphi_{\e, 0})  -\mathcal{L}_0 (\varphi_0)\|_{H^{-1} (\Omega)}  
+C \e \|\nabla^2 \varphi_0\|_{L^2(\Omega)}
$$
and completes the proof.
\end{proof}

We are now in a position to give the proof of Theorem \ref{main-theorem-1}

\begin{proof}[\bf Proof of Theorem \ref{main-theorem-1}]
Let
\begin{equation}\label{M-M}
\aligned
M_0  &=\sup_{0\le t\le T}
\left(\int_\Omega |\nabla^ 2 u_0 (x, t)|^2 dx \right)^{1/2},\\
M_1& =\sup_{0\le t\le T}
\left(\int_\Omega
\big( |\partial_t \nabla^2 u_0 (x, t)|
+|\partial_t^2 \nabla u_0(x, t) | \big)^2 dx \right)^{1/2}.
\endaligned
\end{equation}
Let $w_\e$ be defined by (\ref{w}).
We will show that for any $t\in [0, T]$,
\begin{equation}\label{m-e}
E_\e (t; w_\e)
\le C \Big\{ E_\e (0; w_\e)
+ \e^2 M_0^2  + \e^2 T^2 M^2_1 \Big\},
\end{equation}
where $C$ depends only on $d$ and $\mu$.
This, together with the estimate of  $E_\e (0; w_\e)$ in Lemma \ref{lemma-2.4},
gives the inequality (\ref{main-estimate-1}).

It follows by Lemma \ref{lemma-2.3} that for $0\le t\le T$,
$$
E_\e (t; w_\e)
\le E_\e (0; w_\e)
+ C \e (  T  M_1 + M_0) \sup_{t\in [0, T]}
E_\e (t; w_\e)^{1/2},
$$
where $C$ depends only on $d$ and $\mu$.
This yields 
$$
\aligned
\sup_{t\in [0, T]} E_\e (t; w_\e)
& \le E(0; w_\e)
+ C \e (TM_1+M_0)  \sup_{t\in [0, T]}
E_\e ( t; w_\e)^{1/2} \\
 & \le E_\e (0; w_\e)
+ C \e^2(T^2M^2_1+ M_0^2) 
+\frac12 \sup_{t\in [0, T]} E_\e (t; w_\e),
\endaligned
$$
from which the estimate (\ref{m-e}) follows.
\end{proof}

We end this section with a convergence rate for $\|u_\e (\cdot, t) -u_0 (\cdot, t) \|_{L^2(\Omega)}$
for $0<t<T$.
Consider the initial-Dirichlet problem,
\begin{equation}\label{BVP-3}
\left\{
\aligned
& \big( \partial_t^2 +\mathcal{L}_\e \big) u_\e =0 \quad \text{ in } \Omega_T=\Omega\times (0, T] ,\\
& u_\e=0 \quad \text{ on } S_T=\partial\Omega \times [0, T],\\
&  u_\e (x, 0)=\varphi_0 (x), \quad \partial_t u_\e (x, 0)=\varphi_1 (x) \quad \text{ for } x\in \Omega,
\endaligned
\right.
\end{equation}
and its homogenized problem,
\begin{equation}\label{BVP-3-0}
\left\{
\aligned
& \big( \partial_t^2 +\mathcal{L}_0 \big) u_0 =0 \quad \text{ in } \Omega_T ,\\
& u_0=0 \quad \text{ on } S_T ,\\
&  u_0 (x, 0)=\varphi_0 (x), \quad \partial_t u_0 (x, 0)=\varphi_1 (x) \quad \text{ for } x\in \Omega,
\endaligned
\right.
\end{equation}
where $\varphi_0\in H_0^1(\Omega) \cap H^2(\Omega)$ and $\varphi_1 \in H^1_0(\Omega)$.
Let
$$
v_\e (x, t)=\int_0^t u_\e (x, s)\, ds 
\quad \text{ and } \quad
v_0 (x, t)=\int_0^t u_0 (x, s)\, ds.
$$
Then
$$
\left\{
\aligned
& \big( \partial_t^2 +\mathcal{L}_\e \big) v_\e = \varphi_1\quad \text{ in } \Omega_T ,\\
& v_\e=0 \quad \text{ on } S_T,\\
&  v_\e (x, 0)=0,  \quad \partial_t v_\e (x, 0)=\varphi_0 (x) \quad \text{ for } x\in \Omega,
\endaligned
\right.
$$
and
$$
\left\{
\aligned
& \big( \partial_t^2 +\mathcal{L}_0 \big) v_0 = \varphi_1\quad \text{ in } \Omega_T ,\\
& v_0=0 \quad \text{ on } S_T,\\
&  v_0 (x, 0)=0,  \quad \partial_t v_0 (x, 0)=\varphi_0 (x) \quad \text{ for } x\in \Omega.
\endaligned
\right.
$$
By applying  Theorem \ref{main-theorem-1}  to $v_\e$ and $v_0$ and using (\ref{u-t}),
we see  that for any $t\in (0, T]$,
\begin{equation}\label{3-40}
\aligned
& \| u_\e (\cdot, t)-u_0 (\cdot, t)\|_{L^2(\Omega)}\\
&  \le C \e \|\nabla \varphi_0\|_{L^2(\Omega)}
 +  C \e \sup_{t\in [0, T]}
\|\nabla^2 v_0 (\cdot, t) \|_{L^2(\Omega)}
+C \e \sup_{t\in [0, T]}
\|\nabla u_0 (\cdot, t) \|_{L^2(\Omega)}
\\
& \qquad
+ C \e T 
\sup_{t\in (0, T)} 
\| |\nabla^2 u_0 (\cdot, t)| + | \partial_t \nabla u_0 | \|_{L^2(\Omega)},
\endaligned
\end{equation}
where we have used the fact $\partial_t v_0 =u_0$.
Note that, if $\Omega$ is $C^{1, 1}$,
$$
\aligned
\|\nabla^2 v_0 (\cdot, t) \|_{L^2(\Omega)}
&\le C \|\mathcal{L}_0 (v_0) (\cdot, t) \|_{L^2(\Omega)}\\
&\le C \|\partial_t u_0 (\cdot, t)\|_{L^2(\Omega)}
+ C \| \varphi_1\|_{L^2(\Omega)}\\
&\le C \big\{  \|\nabla \varphi_0\|_{L^2(\Omega)} + \|\varphi_1\|_{L^2(\Omega)} \big\},
\endaligned
$$
where we have used (\ref{e-21}) for the last inequality.
This, together with (\ref{3-40}), (\ref{e-220}) and (\ref{e-22}),  yields that
\begin{equation}\label{3-41}
\aligned
& \| u_\e (\cdot, t) -u_0 (\cdot, t)\|_{L^2(\Omega)}\\
&\le C \e \big\{ \|\nabla \varphi_0\|_{L^2(\Omega) }+ \| \varphi_1\|_{L^2(\Omega) } \big\}\\
& \qquad
+  C \e T
\left( \| \varphi_0\|_{H^2(\Omega)} 
+ \|  \varphi_1\|_{H^1(\Omega)} \right)
\endaligned
\end{equation}
for any $t \in (0, T]$, where $\Omega$ is $C^{1,1}$ and the constant $C$ depends only on $d$, $\mu$ and $\Omega$.



\section{Uniform boundary controllability}\label{section-4}

Throughout this section we will assume that $A=A(y)$ satisfies conditions (\ref{ellipticity}),
(\ref{symmetry}) and  (\ref{periodicity}) as well as  the Lipschitz condition (\ref{Lip-C}).

Let $u_\e$ be the solution of the initial-Dirichlet problem,
\begin{equation}\label{BVP-4}
\left\{
\aligned
& \big( \partial_t^2 +\mathcal{L}_\e \big) u_\e =0 \quad \text{ in } \Omega_T=\Omega\times (0, T] ,\\
& u_\e=0 \quad \text{ on } S_T= \partial\Omega \times [0, T],\\
&  u_\e (x, 0)=\varphi_{\e, 0} (x), \quad \partial_t u_\e (x, 0)=\varphi_{\e, 1} (x) \quad \text{ for } x\in \Omega.
\endaligned
\right.
\end{equation}
We are interested in the estimates (\ref{upper-b}) and (\ref{lower-b}) 
with positive constants $C$ and $c$ independent of $\e>0$.

Let $h=(h_1, h_2, \dots, h_d)$ be a vector field in $ C^1(\mathbb{R}^d; \mathbb{R}^d)$ and $n=(n_1, n_2, \dots, n_d)$
 denote the outward unit normal to $\partial\Omega$.
We start with the following well known Rellich identity,
\begin{equation}\label{R-0}
\aligned
\int_0^T\!\!\!\int_{\partial\Omega} \langle h,  n\rangle  \cdot a_{ij}^\e  \frac{\partial u_\e}{\partial x_j} \cdot \frac{\partial u_\e}{\partial x_i} \, d\sigma dt
= & 
2\int_0^T\!\!\! \int_{\partial\Omega}
h_k a_{ij}^\e  \frac{\partial u_\e}{\partial x_i} \left\{ n_k \frac{\partial u_\e}{\partial x_j}
-n_j \frac{\partial u_\e}{\partial x_k} \right\} d\sigma  dt\\
& \qquad
-\int_0^T\!\!\! \int_\Omega \text{\rm div} (h) \cdot 
 a_{ij}^\e  \frac{\partial u_\e}{\partial x_j} \cdot \frac{\partial u_\e}{\partial x_i} \, dx dt \\
&\qquad
  -\int_0^T\!\!\! \int_\Omega h_k \frac{\partial a_{ij}^\e } {\partial x_k} \cdot  \frac{\partial u_\e}{\partial x_j}\cdot  \frac{\partial u_\e}{\partial x_i}\, dx dt \\
 & \qquad
 +  2\int_0^T\!\!\!  \int_{\Omega}  \frac{\partial h_k}{\partial x_j} \cdot 
 a_{ij}^\e  \frac{\partial u_\e}{\partial x_k} \cdot \frac{\partial u_\e}{\partial x_i} \, dx dt \\
 &\qquad
  - 2\int_0^T\!\!\!  \int_\Omega h_k \frac{\partial u_\e}{\partial x_k} \cdot \mathcal{L}_\e (u_\e)\, dx dt ,
 \endaligned
 \end{equation}
where $a^\e_{ij}= a_{ij}(x/\e)$.
The identity (\ref{R-0})  follows from integration by parts (in the $x$ variable).
We remark that the symmetry condition (\ref{symmetry}), which is essential for (\ref{R-0}) even in the case of constant coefficients, is used to obtain 
$$
 a_{ij}^\e \frac{\partial}{\partial x_k}
\left( \frac{\partial u_\e}{\partial x_i} \cdot \frac{\partial u_\e}{\partial x_j} \right)
=2 \mathcal{L}_\e (u_\e) \cdot \frac{\partial u_\e}{\partial x_k}
+
2 \frac{\partial}{\partial x_i} 
\left( a_{ij} ^\e \frac{\partial u_\e}{\partial x_j} \cdot \frac{\partial u_\e}{\partial x_k} \right)
$$
in the proof of (\ref{R-0}).
It  also follows from integration by parts   that
\begin{equation}\label{R-1}
\aligned
\int_0^T\!\!\! \int_\Omega
h_k \frac{\partial u_\e}{\partial x_k} \cdot \partial_t^2 u_\e \, dx dt 
&=-\frac 12 \int_0^T \!\!\! \int_{\partial\Omega} 
\langle h, n \rangle  \cdot (\partial_t u_\e)^2 \, d\sigma dt\\
&\qquad 
+\frac12 \int_0^T\!\!\! \int_\Omega \text{\rm div} (h) \cdot (\partial_t u_\e )^2\, dx dt\\
&\qquad
+ \int_\Omega h_k \frac{\partial u_\e}{\partial x_k} (x, T) \partial_t u_\e (x, T) \, dx \\
&\qquad
- \int_\Omega h_k \frac{\partial u_\e}{\partial x_k} (x, 0) \partial_t u_\e (x, 0) \, dx.
\endaligned
\end{equation}
Suppose $u_\e=0$ on $\partial\Omega$.
Since $n_k \frac{\partial u_\e }{\partial x_j} -n_j \frac{\partial u_\e}{\partial x_k} =0$ and $\partial_t u_\e =0$ on $\partial\Omega$,
by combining (\ref{R-0}) with (\ref{R-1}), we obtain 
\begin{equation}\label{R-2}
\aligned
\int_0^T\!\!\!\int_{\partial\Omega} \langle h, n \rangle 
 \cdot a_{ij}^\e  \frac{\partial u_\e}{\partial x_j} \cdot \frac{\partial u_\e}{\partial x_i} \, d\sigma dt
= & \int_0^T\!\!\! \int_\Omega \text{\rm div} (h) \cdot 
\Big ( (\partial_t u_\e)^2- a_{ij}^\e  \frac{\partial u_\e}{\partial x_j}   \cdot \frac{\partial u_\e}{\partial x_i}\Big)  dx dt \\
&\qquad
  -\int_0^T\!\!\! \int_\Omega h_k \frac{\partial a_{ij}^\e } {\partial x_k} \cdot  \frac{\partial u_\e}{\partial x_j}\cdot  \frac{\partial u_\e}{\partial x_i}\, dx dt \\
 & \qquad
 +  2\int_0^T\!\!\!  \int_{\Omega}  \frac{\partial h_k}{\partial x_j} \cdot 
 a_{ij}^\e  \frac{\partial u_\e}{\partial x_k} \cdot \frac{\partial u_\e}{\partial x_i} \, dx dt \\
 &\qquad
  - 2\int_0^T\!\!\!  \int_\Omega h_k \frac{\partial u_\e}{\partial x_k} \cdot \big( \partial_t^2 + \mathcal{L}_\e \big)u_\e \, dx dt\\
  &\qquad
+ \int_\Omega h_k \frac{\partial u_\e}{\partial x_k} (x, T) \partial_t u_\e (x, T) \, dx \\
&\qquad
- \int_\Omega h_k \frac{\partial u_\e}{\partial x_k} (x, 0) \partial_t u_\e (x, 0) \, dx.
 \endaligned
 \end{equation}

\begin{lemma}\label{lemma-4.1}
Let $\Omega$ be a bounded Lipschitz domain in $\mathbb{R}^d$.
Let $u_0$ be a weak solution of \eqref{BVP-3-0} for the homogenized operator $\partial_t^2+ \mathcal{L}_0$. Then
\begin{equation}\label{upper-b-0}
\int_0^T\!\!\!\int_{\partial\Omega}
|\nabla u_0|^2\, d\sigma dt
\le C  (T r_0^{-1} +1) \Big\{ \| \nabla \varphi_0 \|_{L^2(\Omega)}^2
+ \|\varphi_1 \|_{L^2(\Omega)}^2 \Big\},
\end{equation}
where  $r_0$ denotes the diameter of $\Omega$.
Moreover, if $T \ge C_0r_0 $, 
\begin{equation}\label{lower-b-0}
T r_0^{-1}   \Big\{ \|\nabla \varphi_0 \|_{L^2(\Omega)}^2
+ \|\varphi_1 \|_{L^2(\Omega)}^2 \Big\}
\le C  \int_0^T\!\!\!\int_{\partial\Omega}
|\nabla u_0|^2\, d\sigma dt.
\end{equation}
The constants  $C$  and $C_0$ depend only on $d$, $\mu$ and the Lipschitz character of $\Omega$.
\end{lemma}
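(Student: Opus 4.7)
The proof rests on applying the Rellich identity \eqref{R-2} to $u_0$ with the homogenized operator $\mathcal{L}_0$. Because $\widehat{A}$ has constant entries, the term $\int\!\!\int h_k\,\partial_k a_{ij}^\e\,\partial_i u_0\,\partial_j u_0$ vanishes; because $(\partial_t^2+\mathcal{L}_0)u_0=0$, the forcing term also drops. The surviving identity expresses the weighted boundary flux $\int_0^T\!\!\int_{\partial\Omega}\langle h,n\rangle\,\widehat a_{ij}\partial_i u_0\partial_j u_0\,d\sigma dt$ in terms of interior integrals built from $\operatorname{div}h$ and $\nabla h$, plus two endpoint time-slice contributions at $t=0,T$. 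Combined with energy conservation $E(t)\equiv E(0)$, where $E(t):=\tfrac12\int_\Omega[(\partial_t u_0)^2+\widehat a_{ij}\partial_i u_0\partial_j u_0]\,dx$, and the companion equipartition identity
\[
\int_0^T\!\!\int_\Omega\bigl[(\partial_t u_0)^2-\widehat a_{ij}\partial_i u_0\partial_j u_0\bigr]dx\,dt=\Bigl[\int_\Omega u_0\,\partial_t u_0\,dx\Bigr]_0^T,
\]
this identity will yield both directions.

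\textbf{Upper bound \eqref{upper-b-0}.} I would choose $h\in C^1(\overline{\Omega};\mathbb{R}^d)$ with $\langle h,n\rangle\ge c_0>0$ on $\partial\Omega$, $\|h\|_\infty\le C$, and $\|\nabla h\|_\infty\le C r_0^{-1}$, produced by a standard partition-of-unity construction on Lipschitz boundary charts followed by rescaling from the unit-diameter case. Ellipticity of $\widehat A$ then gives $\text{LHS}\ge c_0\mu\int_0^T\!\!\int_{\partial\Omega}|\nabla u_0|^2 d\sigma dt$. Each of the two interior terms on the right is bounded by $C r_0^{-1}\int_0^T\!\!\int_\Omega\bigl(|\nabla u_0|^2+(\partial_t u_0)^2\bigr)dx\,dt\le C T r_0^{-1}E(0)$ by energy conservation, while each endpoint time-slice term is bounded by $C E(0)$ via Cauchy--Schwarz using $|h|\le C$. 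Adding these yields \eqref{upper-b-0}.

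\textbf{Lower bound \eqref{lower-b-0}.} I would take the linear multiplier $h(x)=x-x_0$ with $x_0\in\mathbb{R}^d$ arbitrary, so that $\operatorname{div}h=d$, $\partial_j h_k=\delta_{jk}$, and $|h|\le r_0$. The right-hand side of \eqref{R-2} then reduces to
\[
d\!\int_0^T\!\!\int_\Omega(\partial_t u_0)^2 dx\,dt+(2-d)\!\int_0^T\!\!\int_\Omega\widehat a_{ij}\partial_i u_0\partial_j u_0\,dx\,dt+\Bigl[\int_\Omega h\!\cdot\!\nabla u_0\,\partial_t u_0\,dx\Bigr]_0^T,
\]
which, after substituting the energy and equipartition identities, collapses to $2T E(0)+(d-1)\bigl[\int_\Omega u_0\partial_t u_0\,dx\bigr]_0^T+\bigl[\int_\Omega h\!\cdot\!\nabla u_0\,\partial_t u_0\,dx\bigr]_0^T$. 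Cauchy--Schwarz together with Poincar\'e's inequality (using $u_0|_{\partial\Omega}=0$) bounds both endpoint contributions by $C r_0 E(0)$, so the RHS is at least $(2T-C r_0)E(0)$. On the other hand, irrespective of the sign of $\langle h,n\rangle$ on $\partial\Omega$, one has $|\text{LHS}|\le C r_0\int_0^T\!\!\int_{\partial\Omega}|\nabla u_0|^2 d\sigma dt$. Choosing $C_0$ large enough that $T\ge C_0 r_0$ forces $2T-Cr_0\ge T$, and \eqref{lower-b-0} follows.

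\textbf{Main obstacle.} The most delicate conceptual point is that \emph{no star-shapedness of $\Omega$ is needed for the lower bound}: the linear multiplier $x-x_0$ is employed not to secure a one-signed boundary factor $\langle h,n\rangle$, but to obtain a clean lower bound on the \emph{signed} LHS via energy conservation, after which the crude absolute-value estimate $|\text{LHS}|\le C r_0\int|\nabla u_0|^2$ closes the argument. Beyond this, the main work is the construction, for the upper bound, of a vector field $h$ on a general bounded Lipschitz domain with $\langle h,n\rangle\ge c_0>0$ on $\partial\Omega$ and $\|\nabla h\|_\infty\lesssim r_0^{-1}$; this scaling is precisely what dictates the $T r_0^{-1}$ factor in \eqref{upper-b-0}.
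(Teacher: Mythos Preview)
Your proposal is correct and follows essentially the same route as the paper: the upper bound via a vector field with $\langle h,n\rangle\ge c_0>0$ and $|\nabla h|\le C r_0^{-1}$, and the lower bound via the linear multiplier $h(x)=x-x_0$ together with the decomposition $dX+(2-d)Y=(X+Y)+(d-1)(X-Y)$, energy conservation, equipartition, and Poincar\'e. The only slip is the phrase ``$x_0\in\mathbb{R}^d$ arbitrary'' --- you need $x_0\in\overline{\Omega}$ (as in the paper) for $|h|\le r_0$ to hold on $\Omega$.
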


\begin{proof}
This is well known and follows readily from (\ref{R-2}) (with $\widehat{a}_{ij}$ in the place of $a_{ij}^\e$)
(see e.g. \cite{Lions-1988}).
We include  a proof here for  the reader's convenience.
To  see (\ref{upper-b-0}),
we choose a vector field $h\in C^1(\mathbb{R}^d; \mathbb{R}^d)$ such that
$\langle h, n \rangle\ge c_0>0$ on $\partial\Omega$ and $|\nabla h|\le C/r_0$.
It follows from (\ref{R-2}), with $\widehat{a}_{ij}$ in the place of $a_{ij}^\e$,  that
$$
\aligned
c \int_0^T\!\!\! \int_{\partial\Omega}
|\nabla u_0|^2 \, d\sigma dt
 & \le \frac{C}{r_0} \int_0^T\!\!\!\int_\Omega 
\big( |\nabla u_0|^2 +|\partial_t u_0|^2 \big)\, dx dt\\
& \qquad
+ C \int_\Omega |\nabla u_0 (x, T)| \, |\partial_t u_0(x, T)|\, dx
+ C \int_\Omega |\nabla \varphi_0 |\, |\varphi_1|\, dx\\
&\qquad
\le C ( T r_0^{-1} +1) 
\Big\{ \|\nabla \varphi_0\|_{L^2(\Omega)}^2
+\|\varphi_1\|_{L^2(\Omega)}^2 \Big\},
\endaligned
$$
where we have used the energy estimate (\ref{e-21})  for the last step.

To prove (\ref{lower-b-0}),
we choose $h (x)=x-x_0$, where $x_0\in \Omega$.
Note that $\text{\rm div} (h)=d$.
It follows from (\ref{R-2}) that 
\begin{equation}\label{lower-1}
\aligned
& \Big|
-\int_0^T\!\!\! \int_{\partial\Omega}
\langle h, n \rangle\cdot  \widehat{a}_{ij} \frac{\partial u_0}{\partial x_j} \cdot
\frac{\partial u_0}{\partial x_i} \, d\sigma dt
+ d X
+ (2-d)  Y  \Big|\\
& \qquad
\le C r_0 
\Big\{ \|\nabla \varphi_0\|_{L^2(\Omega)}^2
+\|\varphi_1\|_{L^2(\Omega)}^2 \Big\},
\endaligned
\end{equation}
where
$$
\aligned
X&= \int_0^T\!\!\! \int_\Omega (\partial_t u_0)^2\, dx dt,\\
Y&=\int_0^T\!\!\! \int_\Omega \widehat{a}_{ij} \frac{\partial u_0}{\partial x_j} \cdot
\frac{\partial u_0}{\partial x_i} \, dx  dt.
\endaligned
$$
Note that by the  conservation of energy, 
$$
X+Y = T \int_\Omega 
\left( \varphi_1^2 + \widehat{a}_{ij} \frac{\partial \varphi_0}{\partial x_j} \cdot \frac{\partial \varphi_0}{\partial x_i}\right)  dx,
$$
and that 
$$
\aligned
X-Y & =\int_0^T \!\!\! \int_\Omega \partial_t ( u_0 \partial_t u_0 ) \, dx dt\\
&=\int_\Omega u_0 (x, T) \partial_t u_0 (x, T)\, dx- 
\int_\Omega \varphi_0 \varphi_1\, dx\\
&\le C r_0 \Big\{ \|\nabla \varphi_0\|_{L^2(\Omega)}^2
+ \|\varphi_1\|_{L^2(\Omega)}^2 \Big\},
\endaligned
$$
where we have used Poincar\'e's inequality and the energy estimates for the last step.
By writing $dX + (2-d) Y$ as $ (X+Y) + (d-1) (X-Y)$,
we deduce from (\ref{lower-1}) that
$$
\aligned
& \Big|
\int_0^T\!\!\! \int_{\partial \Omega}
\langle h, n \rangle \cdot  \widehat{a}_{ij} \frac{\partial u_0}{\partial x_j} \cdot
\frac{\partial u_0}{\partial x_i} \, d\sigma dt
-
T \int_\Omega 
\left( \varphi_1^2 + \widehat{a}_{ij} \frac{\partial \varphi_0}{\partial x_j} \cdot \frac{\partial \varphi_0}{\partial x_i}\right)  dx\Big|\\
&
\le C r_0 \Big\{ \|\nabla \varphi_0\|_{L^2(\Omega)}^2
+ \|\varphi_1\|_{L^2(\Omega)}^2 \Big\},
\endaligned
$$
from which the inequality  (\ref{lower-b-0}) follows if
$T\ge C_0 r_0$.
\end{proof}

The argument used in the proof of Lemma \ref{lemma-4.1} for $\partial_t^2 +\mathcal{L}_0$
does not  work for the operator $\partial_t^2 + \mathcal{L}_\e$;
 the derivative of $a_{ij}^\e$ is unbounded as $\e\to  0$.
 Our approach to Theorem \ref{main-theorem-2}  is to approximate  the solution $u_\e$ of (\ref{BVP-4}) with initial data 
 $( \varphi_{\e, 0}, 
 \varphi_{\e, 1}) $
  by a solution of (\ref{BVP-3-0}) for
 the homogenized operator $\partial_t^2 +\mathcal{L}_0$
 with initial data  $( \varphi_0, \varphi_1) $,
 where $\varphi_1 =\varphi_{\e, 1}$ and $\varphi_0$ is the function  in $H^1_0 (\Omega)$ such that
\begin{equation}\label{data-0}
\mathcal{L}_ 0 (\varphi_{0}) =\mathcal{L}_\e (\varphi_{\e, 0})  \quad \text{ in }  \Omega.
\end{equation}

\begin{lemma}\label{lemma-4.2}
Let $\Omega$ be a bounded $C^3$ domain in $\mathbb{R}^d$.
Let $u_\e$ and $u_0$ be the solutions of \eqref{BVP-4} and \eqref{BVP-3-0}
with initial data $(\varphi_{\e, 0}, \varphi_{\e, 1}) $ and
$(\varphi_0, \varphi_1)$, respectively.
Assume that $\varphi_1=\varphi_{1, \e}\in H^2(\Omega)\cap H_0^1(\Omega)$ 
and $\varphi_0 \in H^3(\Omega) \cap H_0^1(\Omega)$ satisfies \eqref{data-0}.
Let $w_\e$ be given by \eqref{w}.
Then for $0<\e<\min( r_0, T)$, 
\begin{equation}\label{4.2-0}
\aligned
\int_0^T \!\!\!\int_{\partial\Omega}
|\nabla w_\e|^2\, d\sigma dt
 & \le C T\e \Big\{ \|\varphi_0\|^2_{H^2(\Omega)}
 +\|\varphi_1\|^2_{H^1(\Omega)} \Big\}\\
 & \qquad
 + C T^{3} \e 
 \Big\{ \|\varphi_0 \|^2_{H^3(\Omega)}
 + \|\varphi_1\|^2_{H^2(\Omega)} \Big\}
 \\
 &\qquad
 + CT \e^3 \Big\{ \|\varphi_0\|^2_{H^3(\Omega)}
 + \|\varphi_1\|^2_{H^2(\Omega)}  \Big\},
 \endaligned
\end{equation}
where $C$ depends only on $d$, $\mu$, $M$,  and $\Omega$.
\end{lemma}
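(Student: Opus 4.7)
The plan is to apply the Rellich-type identity (\ref{R-2}) to $w_\e$ itself (not $u_\e$), and then convert the resulting interior estimate into (\ref{4.2-0}) by invoking Theorem \ref{main-theorem-1} to control $\sup_{t\in[0,T]} E_\e(t;w_\e)$. The observation that unlocks this approach is that $w_\e\equiv 0$ on $\partial\Omega\times[0,T]$: indeed $u_\e=u_0=0$ on $S_T$, and $\Phi_{\e,k}(x)=x_k$ on $\partial\Omega$ by (\ref{DC}), so both $w_\e$ and $\partial_t w_\e$ vanish on $S_T$. After fixing a smooth vector field $h$ on $\Omega$ with $\langle h,n\rangle\ge c_0>0$ on $\partial\Omega$, the ellipticity (\ref{ellipticity}) shows that controlling the boundary integral in (\ref{4.2-0}) is equivalent to controlling the right-hand side of (\ref{R-2}) with $u_\e$ replaced by $w_\e$.

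The terms on the right of (\ref{R-2}) (applied to $w_\e$) fall into three groups. The pure energy contributions (the $\text{\rm div}(h)$ and $\partial_j h_k$ bulk terms together with the two time-slice terms at $t=0,T$) are all bounded by $C(T+1)\sup_t E_\e(t;w_\e)$. The Lipschitz term $-\int h_k\,\partial_k a_{ij}^\e\,\partial_j w_\e\,\partial_i w_\e$ is where assumption (\ref{Lip-C}) is used: $|\nabla a^\e|\le M\e^{-1}$ gives a bound $CM\e^{-1}T\sup_t E_\e(t;w_\e)$. Finally, the source term $-2\int h_k\,\partial_k w_\e\,(\partial_t^2+\mathcal{L}_\e)w_\e$ is expanded via (\ref{L-w}): the two divergence-form pieces are integrated by parts in $x$, transferring one derivative onto $h_l\,\partial_l w_\e$; the remaining two pieces, which carry the factor $\Phi_\e-x$ of size $O(\e)$, are estimated directly using Cauchy--Schwarz together with the Caccioppoli-type bound (\ref{Ca}). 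Each resulting contribution is of order $\e\|\nabla w_\e\|_{L^2(\Omega_T)}(\|\nabla^2 u_0\|_{L^2(\Omega_T)}+\|\partial_t^2\nabla u_0\|_{L^2(\Omega_T)})+\e^2\|\nabla^2 u_0\|_{L^2(\Omega_T)}\|\nabla^3 u_0\|_{L^2(\Omega_T)}$.

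Applying Theorem \ref{main-theorem-1} with $\varphi_1=\varphi_{\e,1}$ and $\varphi_0$ as in (\ref{data-0}) kills the first two terms of (\ref{main-estimate-1}), and the regularity estimates (\ref{e-22})--(\ref{e-23}) convert the remaining $\e$-factors into norms of $\varphi_0,\varphi_1$, giving
\[
\sup_t E_\e(t;w_\e)\le C\e^2\bigl\{\|\varphi_0\|_{H^2}+\|\varphi_1\|_{H^1}\bigr\}^2+C\e^2 T\bigl\{\|\varphi_0\|_{H^2}+\|\varphi_1\|_{H^1}\bigr\}\bigl\{\|\varphi_0\|_{H^3}+\|\varphi_1\|_{H^2}\bigr\}.
\]
Multiplying by the dominant Rellich factor $\e^{-1}T$ from the Lipschitz term produces the first two summands in (\ref{4.2-0}). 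The third summand $CT\e^3\{\|\varphi_0\|_{H^3}^2+\|\varphi_1\|_{H^2}^2\}$ arises from the $\|\nabla^3 u_0\|_{L^2(\Omega_T)}^2$ part of the $(\partial_t^2+\mathcal{L}_\e)w_\e$ pairing after Cauchy--Schwarz is used to absorb the residual $\nabla w_\e$ factor back into the energy, at which stage the factor $T$ and (\ref{e-23}) convert $\|\nabla^3 u_0\|_{L^2(\Omega_T)}^2$ into $T\{\|\varphi_0\|_{H^3}+\|\varphi_1\|_{H^2}\}^2$.

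The main technical obstacle is organizing the $(\partial_t^2+\mathcal{L}_\e)w_\e$ pairing cleanly. Integration by parts on the divergence-form pieces of (\ref{L-w}) against $h_l\,\partial_l w_\e$ would naively produce a $\nabla^2 w_\e$ term that is not controlled by the energy norm; this has to be avoided by performing a further integration by parts that moves one derivative back onto the smooth factor $\partial^2 u_0$, which is precisely what forces the appearance of $\|\varphi_0\|_{H^3}$ and $\|\varphi_1\|_{H^2}$ on the right of (\ref{4.2-0}) and accounts for the extra factor of $T$ in the cross term. Careful bookkeeping is required here, but once this rearrangement is in place the remaining estimates are standard applications of Cauchy--Schwarz, the boundedness of $\chi_k$ and $\phi_{kij}$, and the bound $\|\Phi_{\e,k}-x_k\|_{L^\infty}\le C\e$.
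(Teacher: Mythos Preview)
Your overall strategy---apply the Rellich identity (\ref{R-2}) to $w_\e$ (which vanishes on $S_T$), use the Lipschitz bound $|\nabla a_{ij}^\e|\le M\e^{-1}$ to get the dominant interior factor $C\e^{-1}T\sup_t E_\e(t;w_\e)$, and then invoke Theorem \ref{main-theorem-1} together with (\ref{e-22})--(\ref{e-23})---is exactly the paper's approach.

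The one place where you work harder than necessary is the source term $\int h_k\,\partial_k w_\e\,(\partial_t^2+\mathcal{L}_\e)w_\e$. The paper does not integrate the divergence pieces of (\ref{L-w}) by parts at all; instead it uses the Lipschitz hypothesis (\ref{Lip-C}) a second time to note that $\nabla\chi$, $\nabla\phi_{kij}$, and $\nabla\Phi_\e$ are all bounded (see (\ref{Lip})), which gives the pointwise estimate
\[
|(\partial_t^2+\mathcal{L}_\e)w_\e|\le C\big\{|\nabla^2 u_0|+\e|\nabla^3 u_0|+\e|\partial_t^2\nabla u_0|\big\}.
\]
One then splits $\int|\nabla w_\e|\,|(\partial_t^2+\mathcal{L}_\e)w_\e|\le \tfrac{1}{2}\e^{-1}\int|\nabla w_\e|^2+\tfrac{1}{2}\e\int|(\partial_t^2+\mathcal{L}_\e)w_\e|^2$ by Cauchy; the first piece is absorbed into the Lipschitz term, and the second immediately produces the $T\e$ and $T\e^3$ contributions in (\ref{4.2-0}). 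This sidesteps the $\nabla^2 w_\e$ issue you flag entirely. Note also that your appeal to (\ref{Ca}) for the third term of (\ref{L-w}) is not quite right as written: (\ref{Ca}) controls $\nabla[\Phi_\e-x-\e\chi]\cdot w_\e$, whereas here the multiplier is $h_l\partial_l w_\e$; under (\ref{Lip-C}) the correct (and simpler) observation is just that $|\nabla[\Phi_\e-x-\e\chi]|\le C$ pointwise.
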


\begin{proof}
Let $h$ be  a vector  field in $C^1(\mathbb{R}^d; \mathbb{R}^d)$ such that
$\langle h, n\rangle \ge c_0>0$ on $\partial\Omega$ and $|\nabla h|\le C r_0^{-1}$.
We apply the Rellich identity (\ref{R-2}) with $w_\e$ in the place of $u_\e$.
This gives
\begin{equation}\label{4.2-1}
\aligned
\int_0^T\!\!\!\int_{\partial\Omega}
|\nabla w_\e|^2\, d\sigma dt
&\le \frac{C}{\e} \int_0^T\!\!\!  \int_\Omega |\nabla w_\e|^2\, dx dt
+C \int_0^T\!\!\!\int_\Omega |\nabla w_\e | | (\partial_t^2 +\mathcal{L}_\e) w_\e|\, dx dt\\
&\qquad
+  C \sup_{t\in [0, T]}
\|\nabla w_\e (\cdot, t) \|_{L^2(\Omega)}
\|\partial_t w_\e (\cdot, t) \|_{L^2(\Omega)}\\
&\le
C T\e^{-1} \sup_{ t \in [0, T]}
\Big\{ \|\nabla w_\e (\cdot, t)\|_{L^2(\Omega)}^2
+ \|\partial_t w_\e (\cdot, t) \|_{L^2(\Omega)}^2 \Big\}\\
&\qquad
+ C \e\int_0^T \!\!\!\int_\Omega
|(\partial_t^2 + \mathcal{L}_\e) w_\e |^2\, dx dt,
\endaligned
\end{equation}
where we have used the Cauchy inequality for the last step.
Since $\Omega$ is $C^{3}$ and $A$ is Lipschitz,
$\nabla \Phi_\e$ is bounded.
Also, under the smoothness condition (\ref{Lip-C}), the functions  $\nabla \chi_j$ and $\nabla \phi_{kij}$ are bounded.
Thus, in view of (\ref{L-w}), we obtain 
\begin{equation}\label{4.2-2}
|(\partial_t^2 +\mathcal{L}_\e)  w_\e |
\le C \big\{ |\nabla^2 u_0| +\e |\nabla^3 u_0| + \e |\partial^2_t \nabla u_0| \big\}.
\end{equation}
This, together with  (\ref{4.2-1}) and Theorem \ref{main-theorem-1}, gives
$$
\aligned
& \int_0^T\!\!\!\int_{\partial\Omega}
|\nabla w_\e|^2\, d\sigma dt\\
&
  \le C T \e 
\Big\{ \|  \varphi_0 \|^2_{H^2(\Omega)}
+ \|  \varphi_1 \|^2_{H^1(\Omega)} \Big\} \\
&\qquad
+ C T \e \sup_{t\in (0, T]}  \|  \nabla^2  u_0 (\cdot, t) \|^2_{L^2(\Omega)}\\
&\qquad
 + C T^3  \e  
 \sup_{t\in (0, T]}  \| |\partial_t \nabla^2 u_0 (\cdot, t) | +| \partial_t^2 \nabla u_0 (\cdot, t)| \|^2_{L^2(\Omega)}\\
&\qquad
+ C T \e^3
\sup_{t\in (0, T]}
\| |\nabla ^3 u_0 (\cdot, t)|
+|\partial_t^2 \nabla u_0 (\cdot, t) | \|^2_{L^2(\Omega)}, 
\endaligned
 $$
from which the estimate (\ref{4.2-0}) follows by using the energy estimates (\ref{e-22}) and (\ref{e-23}).
\end{proof}

The next theorem provides an upper bound for $\|\nabla  u_\e\|_{L^2(S_T) }$.

\begin{thm}\label{theorem-4.1}
Assume that $A$ satisfies conditions \eqref{ellipticity}, \eqref{symmetry},
\eqref{periodicity},  and \eqref{Lip-C}.
Let $\Omega$ be a bounded $C^3$ domain in $\mathbb{R}^d$.
Let $u_\e$ be a weak solution of \eqref{BVP-4}
with initial data $\varphi_{\e, 0} \in H^3(\Omega)\cap H_0^1(\Omega)$ and
$\varphi_{\e, 1} \in H^2(\Omega)\cap H_0^1(\Omega)$.
Then,  for $0<\e<\min (T, r_0)$,
\begin{equation}\label{upper-40}
\aligned
& \int_0^T\!\!\! \int_{\partial\Omega} |\nabla u_\e|^2\, d\sigma dt\\
&\le C T \Big\{ \|\nabla \varphi_{\e, 0} \|^2_{L^2(\Omega)}
+ \| \varphi_{\e, 1} \|^2_{L^2(\Omega)} \Big\}\\
&\qquad
+ C T\e \Big\{ \|\mathcal{L}_\e (\varphi_{\e, 0} )\|^2_{L^2(\Omega)}
+ \|\nabla \varphi_{\e, 1} \|^2_{L^2(\Omega)} \Big\}\\
&\qquad
 + C T^{3} \e 
 \Big\{ \| \mathcal {L}_\e (\varphi_{\e, 0} )  \|^2_{H^1(\Omega)}
 + \| \mathcal{L}_\e (\varphi_{\e, 1} ) \|^2_{L^2 (\Omega)} \Big\}
 \\
 &\qquad
 + CT \e^3 \Big\{ \| \mathcal{L}_\e (\varphi_{\e, 0} ) \|^2_{H^1(\Omega)}
 + \|\mathcal{L}_\e ( \varphi_{\e, 1} ) \|^2_{L^2(\Omega)}  \Big\},
 \endaligned
\end{equation}
where $C$ depends only on $d$, $\mu$, $M$, and $\Omega$.
\end{thm}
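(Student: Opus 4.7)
The plan is to prove Theorem \ref{theorem-4.1} by comparing $u_\e$ with a suitably chosen solution $u_0$ of the homogenized equation, and then combining the interior convergence estimate with a direct boundary estimate for the constant-coefficient wave equation.

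First I would select the initial data for the homogenized problem (\ref{BVP-3-0}) as in the discussion preceding Lemma \ref{lemma-4.2}: take $\varphi_1=\varphi_{\e,1}$ and let $\varphi_0\in H_0^1(\Omega)$ be the unique solution of $\mathcal{L}_0(\varphi_0)=\mathcal{L}_\e(\varphi_{\e,0})$ in $\Omega$. Writing $u_\e = u_0 + (\Phi_{\e,k}-x_k)\partial u_0/\partial x_k + w_\e$, I observe that on $\partial\Omega$ the factor $\Phi_{\e,k}-x_k$ vanishes, so tangential differentiation shows that on $S_T$
\[
|\nabla u_\e| \le |\nabla w_\e| + C\,|\nabla u_0|,
\]
where I use the boundary Lipschitz estimate (\ref{Lip}) to bound $|\nabla \Phi_{\e,k}|$ on $\partial\Omega$ by a constant depending only on $d,\mu,M,\Omega$. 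Therefore
\[
\int_0^T\!\!\!\int_{\partial\Omega}|\nabla u_\e|^2\,d\sigma dt
\le 2\int_0^T\!\!\!\int_{\partial\Omega}|\nabla w_\e|^2\,d\sigma dt
+ C\int_0^T\!\!\!\int_{\partial\Omega}|\nabla u_0|^2\,d\sigma dt.
\]

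Next I would bound each term on the right. For the $u_0$-term I invoke Lemma \ref{lemma-4.1}, which gives $\int_0^T\!\!\int_{\partial\Omega}|\nabla u_0|^2\,d\sigma dt \le C(Tr_0^{-1}+1)(\|\nabla\varphi_0\|_{L^2}^2+\|\varphi_1\|_{L^2}^2)$. For the $w_\e$-term I apply Lemma \ref{lemma-4.2}, which controls $\int_0^T\!\!\int_{\partial\Omega}|\nabla w_\e|^2\,d\sigma dt$ by a sum of terms involving $\|\varphi_0\|_{H^k(\Omega)}$ and $\|\varphi_1\|_{H^{k-1}(\Omega)}$ for $k=2,3$, multiplied by powers of $T$ and $\e$ matching the right-hand side of (\ref{upper-40}).

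The final step is the translation from the norms of $(\varphi_0,\varphi_1)$ into the norms of $(\varphi_{\e,0},\varphi_{\e,1})$ that appear in (\ref{upper-40}). Since $\mathcal{L}_0(\varphi_0)=\mathcal{L}_\e(\varphi_{\e,0})$ with $\varphi_0,\varphi_{\e,0}\in H_0^1(\Omega)$, the ellipticity conditions (\ref{ellipticity}) and (\ref{ellipticity-0}) yield $\|\nabla\varphi_0\|_{L^2(\Omega)}\le C\|\nabla\varphi_{\e,0}\|_{L^2(\Omega)}$ via the $H^{-1}$--$H_0^1$ duality. The $C^3$-regularity of $\partial\Omega$ together with $H^2$ and $H^3$ elliptic regularity for the constant-coefficient operator $\mathcal{L}_0$ give
\[
\|\varphi_0\|_{H^2(\Omega)}\le C\|\mathcal{L}_\e(\varphi_{\e,0})\|_{L^2(\Omega)},\qquad
\|\varphi_0\|_{H^3(\Omega)}\le C\|\mathcal{L}_\e(\varphi_{\e,0})\|_{H^1(\Omega)},
\]
while for $\varphi_1=\varphi_{\e,1}$ the corresponding bound $\|\varphi_1\|_{H^2(\Omega)}\le C\|\mathcal{L}_\e(\varphi_{\e,1})\|_{L^2(\Omega)}$ holds by the same regularity theory applied to $\mathcal{L}_\e$ (under (\ref{Lip-C}) and the $C^3$ assumption on $\Omega$). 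Substituting these bounds and collecting powers of $T$ and $\e$ yields (\ref{upper-40}).

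I do not expect a genuine obstacle: the structural work has already been done in Theorem \ref{main-theorem-1} (giving interior convergence) and Lemmas \ref{lemma-4.1}--\ref{lemma-4.2} (giving the two boundary estimates). The only point requiring care is bookkeeping the elliptic-regularity conversion between $\varphi_0$- and $\varphi_{\e,0}$-norms, which explains why the right-hand side of (\ref{upper-40}) is phrased in terms of $\mathcal{L}_\e(\varphi_{\e,0})$ and $\mathcal{L}_\e(\varphi_{\e,1})$ rather than in higher Sobolev norms of $\varphi_{\e,0}$ and $\varphi_{\e,1}$; these are equivalent formulations under our smoothness assumptions but the former is the natural one relative to the eigenfunction expansion to be used in the proof of Theorem \ref{main-theorem-2}.
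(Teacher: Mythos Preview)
Your proposal is correct and follows essentially the same route as the paper: decompose $\nabla u_\e$ on $\partial\Omega$ via $w_\e$ and $u_0$, invoke Lemma~\ref{lemma-4.1} for the homogenized boundary term, Lemma~\ref{lemma-4.2} for the $w_\e$ boundary term, and then convert the $(\varphi_0,\varphi_1)$-norms into $(\varphi_{\e,0},\varphi_{\e,1})$-norms via the relation $\mathcal{L}_0(\varphi_0)=\mathcal{L}_\e(\varphi_{\e,0})$ and elliptic regularity.

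One small difference worth noting: the paper keeps the full identity $\nabla w_\e=\nabla u_\e-(\nabla\Phi_\e)(\nabla u_0)-(\Phi_\e-x)\nabla^2 u_0$ and carries along a third boundary term $C\e^2\int_0^T\!\int_{\partial\Omega}|\nabla^2 u_0|^2\,d\sigma dt$, which it then controls by a trace inequality. You instead observe that $\Phi_\e-x=0$ on $\partial\Omega$ (from the very definition of the Dirichlet corrector), so this term vanishes identically and the trace inequality is not needed. Your simplification is valid; the paper's extra term is harmless but redundant.
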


\begin{proof}
Let $u_0$, $w_\e$ be the same as in Lemma \ref{lemma-4.2}.
Note that 
\begin{equation}\label{g-w}
\nabla w_\e =\nabla u_\e - (\nabla \Phi_\e) (\nabla u_0) - (\Phi_\e -x ) \nabla^2 u_0.
\end{equation}
It follows that
\begin{equation}\label{upper-41}
\aligned
& \int_0^T\!\!\! \int_{\partial\Omega} |\nabla u_\e|^2\, d\sigma dt\\
& \le C\int_0^T\!\!\! \int_{\partial\Omega} |\nabla w_\e|^2\, d\sigma dt
+ C \int_0^T\!\!\! \int_{\partial\Omega} |\nabla u_0 |^2\, d\sigma dt
+ C\e^2  \int_0^T\!\!\! \int_{\partial\Omega} |\nabla^2  u_0|^2\, d\sigma dt.
\endaligned
\end{equation}
To bound the first term in the right-hand side of (\ref{upper-41}),
we use (\ref{4.2-1}) as well as the fact   that $\varphi_1=\varphi_{\e, 1}$ and
$\mathcal{L}_0 (\varphi_0)=\mathcal{L}_\e (\varphi_{\e, 0 }) $ in $\Omega$.
The second term in the right-hand side of (\ref{upper-41}) is handled by Lemma \ref{lemma-4.1}.
Finally, to bound  the third  term, we use the inequality 
\begin{equation}\label{trace}
\int_{\partial\Omega}
|\nabla^2 u_0|^2\, d\sigma 
\le C \int_\Omega |\nabla^2 u_0|^2\, dx
+  C \int_\Omega |\nabla^2 u_0|\, |\nabla ^3 u_0|\, dx.
\end{equation}
To see (\ref{trace}), one chooses a vector field $h \in C_0^1 (\mathbb{R}^d; \mathbb{R}^d)$ such that
$\langle h, n \rangle\ge c_0>0 $ on $\partial\Omega$, and applies  the divergence theorem to  the integral 
$$
\int_{\partial\Omega} |\nabla^2 u_0|^2 \langle h, n \rangle\, d\sigma.
$$
\end{proof}

We also obtain a lower bound for $\|\nabla  u_\e\|_{L^2(S_T) }$.

\begin{thm}\label{theorem-4.2}
Assume that $A$ and $\Omega$ satisfies the same conditions as in Theorem \ref{theorem-4.1}.
Let $u_\e$ be a weak solution of \eqref{BVP-4}
with initial data $\varphi_{\e, 0} \in H^3(\Omega)\cap H_0^1(\Omega)$ and
$\varphi_{\e, 1} \in H^2(\Omega)\cap H_0^1(\Omega)$.
Then, if $T\ge C_0 r_0$ and $0<\e<r_0$,
\begin{equation}\label{lower-40}
\aligned
 & Tr_0^{-1}  \Big\{ \|\nabla \varphi_{\e, 0} \|^2_{L^2(\Omega)}
+ \| \varphi_{\e, 1} \|^2_{L^2(\Omega)} \Big\}\\
&
 \le  C \int_0^T\!\!\! \int_{\partial\Omega} |\nabla u_\e|^2\, d\sigma dt\\
 &\qquad
+ C T\e \Big\{ \|\mathcal{L}_\e (\varphi_{\e, 0} )\|^2_{L^2(\Omega)}
+ \|\nabla \varphi_{\e, 1} \|^2_{L^2(\Omega)} \Big\}\\
&\qquad
 + C T^{3} \e 
 \Big\{ \| \mathcal {L}_\e (\varphi_{\e, 0} )  \|^2_{H^1(\Omega)}
 + \| \mathcal{L}_\e (\varphi_{\e, 1} ) \|^2_{L^2 (\Omega)} \Big\}
 \\
 &\qquad
 + CT \e^3 \Big\{ \| \mathcal{L}_\e (\varphi_{\e, 0} ) \|^2_{H^1(\Omega)}
 + \|\mathcal{L}_\e ( \varphi_{\e, 1} ) \|^2_{L^2(\Omega)}  \Big\},
 \endaligned
\end{equation}
where $C$ depends only on $d$, $\mu$, $M$, and $\Omega$.
\end{thm}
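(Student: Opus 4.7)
The plan is to mirror the upper-bound argument of Theorem~\ref{theorem-4.1}: pass to the solution $u_0$ of the homogenized problem, where Lemma~\ref{lemma-4.1} supplies a \emph{lower} observability bound, and then use the Dirichlet-corrector approximation to transfer that bound back to $u_\e$. Set $\varphi_1=\varphi_{\e,1}$ and choose $\varphi_0\in H^1_0(\Omega)$ satisfying $\mathcal{L}_0(\varphi_0)=\mathcal{L}_\e(\varphi_{\e,0})$; let $u_0$ solve (\ref{BVP-3-0}) with data $(\varphi_0,\varphi_1)$, and let $w_\e$ be given by (\ref{w}).

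First I would invoke the lower bound (\ref{lower-b-0}) of Lemma~\ref{lemma-4.1} applied to $u_0$, which is available since $T\ge C_0 r_0$ and $\mathcal{L}_0$ has constant coefficients:
$$
T r_0^{-1}\Big\{\|\nabla\varphi_0\|^2_{L^2(\Omega)}+\|\varphi_1\|^2_{L^2(\Omega)}\Big\} \le C\int_0^T\!\!\!\int_{\partial\Omega}|\nabla u_0|^2\,d\sigma dt.
$$
To replace the left-hand side by norms of the actual data, I would use the identity
$$
\int_\Omega\!\langle A(x/\e)\nabla\varphi_{\e,0},\nabla\varphi_{\e,0}\rangle\,dx = \langle\mathcal{L}_\e\varphi_{\e,0},\varphi_{\e,0}\rangle = \langle\mathcal{L}_0\varphi_0,\varphi_{\e,0}\rangle = \int_\Omega\!\langle\widehat{A}\nabla\varphi_0,\nabla\varphi_{\e,0}\rangle\,dx,
$$
together with (\ref{ellipticity}), (\ref{ellipticity-0}) and the Cauchy--Schwarz inequality, to deduce $\|\nabla\varphi_{\e,0}\|_{L^2(\Omega)}\le C\|\nabla\varphi_0\|_{L^2(\Omega)}$. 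Since $\varphi_1=\varphi_{\e,1}$, this promotes the bound above to one whose left-hand side is $T r_0^{-1}\{\|\nabla\varphi_{\e,0}\|^2_{L^2(\Omega)}+\|\varphi_{\e,1}\|^2_{L^2(\Omega)}\}$.

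Next, to convert the boundary integral of $\nabla u_0$ into one of $\nabla u_\e$, I would use (\ref{g-w}), $\nabla w_\e = \nabla u_\e - (\nabla\Phi_\e)(\nabla u_0) - (\Phi_\e-x)\nabla^2 u_0$. Because $\Phi_\e = x$ on $\partial\Omega$, the last term vanishes on $S_T$, giving $(\nabla\Phi_\e)(\nabla u_0)=\nabla u_\e-\nabla w_\e$ there. Combined with the pointwise bound $|\det\nabla\Phi_\e|\ge c>0$ on $\partial\Omega$ (cited after (\ref{Lip})), this yields $|\nabla u_0|\le C(|\nabla u_\e|+|\nabla w_\e|)$ on $\partial\Omega$, and hence
$$
\int_0^T\!\!\!\int_{\partial\Omega}|\nabla u_0|^2\,d\sigma dt \le C\int_0^T\!\!\!\int_{\partial\Omega}|\nabla u_\e|^2\,d\sigma dt + C\int_0^T\!\!\!\int_{\partial\Omega}|\nabla w_\e|^2\,d\sigma dt.
$$
The second term is controlled by Lemma~\ref{lemma-4.2}, and the resulting $H^k$ norms of $\varphi_0,\varphi_1$ are converted back to norms of $\varphi_{\e,0},\varphi_{\e,1}$ exactly as in the proof of Theorem~\ref{theorem-4.1}, through the elliptic-regularity bounds $\|\varphi_0\|_{H^2(\Omega)}\le C\|\mathcal{L}_0\varphi_0\|_{L^2(\Omega)}=C\|\mathcal{L}_\e\varphi_{\e,0}\|_{L^2(\Omega)}$ and $\|\varphi_0\|_{H^3(\Omega)}\le C\|\mathcal{L}_\e\varphi_{\e,0}\|_{H^1(\Omega)}$, with analogous bounds for $\varphi_1$. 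Assembling these ingredients reproduces the error terms in (\ref{lower-40}).

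The main obstacle I anticipate is the uniform boundary invertibility of $\nabla\Phi_\e$, which is what licenses the pointwise extraction of $|\nabla u_0|$ from $|(\nabla\Phi_\e)(\nabla u_0)|$ on $\partial\Omega$; this rests on the boundary Lipschitz estimate for $\mathcal{L}_\e$ (as used for (\ref{Lip})) together with the determinant lower bound $|\det\nabla\Phi_\e|\ge c>0$ recalled in the introduction. Once these facts are in hand, the remaining work is strictly parallel to Theorem~\ref{theorem-4.1} and amounts to combining Lemma~\ref{lemma-4.2} with the elliptic-regularity conversion just described.
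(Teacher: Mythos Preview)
Your proposal is correct and follows essentially the same route as the paper: apply the homogenized lower bound (\ref{lower-b-0}) to $u_0$, use (\ref{g-w}) together with the boundary invertibility of $\nabla\Phi_\e$ (i.e.\ $|\det(\nabla\Phi_\e)|\ge c_0>0$ on $\partial\Omega$, which the paper cites from \cite{KLS-eigenvalue}) to pass from $\nabla u_0$ to $\nabla u_\e$ and $\nabla w_\e$ on $S_T$, and then invoke Lemma~\ref{lemma-4.2}. Two small remarks: your observation that $\Phi_\e-x=0$ on $\partial\Omega$ (so the $\nabla^2 u_0$ term in (\ref{g-w}) drops out on the boundary) is a clean simplification the paper does not exploit---it instead carries the extra term $C\e^2\int_0^T\int_{\partial\Omega}|\nabla^2 u_0|^2$ and bounds it via the trace inequality (\ref{trace}); and your explicit justification of $\|\nabla\varphi_{\e,0}\|_{L^2}\le C\|\nabla\varphi_0\|_{L^2}$ fills in a step the paper leaves implicit in (\ref{lower-41}).
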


\begin{proof}
The proof uses (\ref{g-w}) and  the fact that 
\begin{equation}\label{deter}
|\text{\rm det}
\left(\nabla \Phi_\e\right) | \ge c_0>0 \quad \text{  on }  \partial\Omega,
\end{equation}
which was proved in \cite{KLS-eigenvalue}.
Let $u_0$, $w_\e$ be the same as in Lemma \ref{lemma-4.2}.
It follows from (\ref{lower-b-0}), (\ref{g-w}) and (\ref{deter})  that 
\begin{equation}\label{lower-41}
\aligned
&  Tr_0^{-1}  \Big\{ \|\nabla \varphi_{\e, 0} \|^2_{L^2(\Omega)}
+ \| \varphi_{\e, 1} \|^2_{L^2(\Omega)} \Big\}\\
&\le C  \int_0^T\!\!\! \int_{\partial\Omega} |\nabla u_0|^2\, d\sigma dt\\
& \le C\int_0^T\!\!\! \int_{\partial\Omega} |\nabla u_\e|^2\, d\sigma dt
+ C \int_0^T\!\!\! \int_{\partial\Omega} |\nabla w_\e  |^2\, d\sigma dt
+ C\e^2  \int_0^T\!\!\! \int_{\partial\Omega} |\nabla^2  u_0|^2\, d\sigma dt.
\endaligned
\end{equation}
The last two terms in the right-hand side of (\ref{lower-41})
are treated exactly as in the proof of Theorem \ref{theorem-4.1}.
\end{proof}

\begin{proof}[\bf Proof of Theorem \ref{main-theorem-2}]
Let 
$$
\varphi_{\e, 0} =\sum_{\lambda_{\e, k} \le N} a_k \psi_{\e,  k} 
\quad 
\text{ and }
\quad
\varphi_{\e, 1} =\sum_{\lambda_{\e, k} \le N} b_k \psi_{\e, k},
$$
where $\{ \psi_{\e, k} \}$ forms an orthonormal basis for $L^2(\Omega)$, $\psi_{\e, k} \in H_0^1(\Omega)$ and
$\mathcal{L}_\e (\psi_{\e, k} ) =\lambda_{\e, k} \psi_{\e, k}$ in $\Omega$.
Then
\begin{equation} \label{last-1}
\|\nabla \varphi_{\e, 0} \|_{L^2(\Omega)}^2
+\|\varphi_{\e, 1} \|_{L^2(\Omega)}^2
\sim \sum_{\lambda_{\e, k} \le N}
\Big\{ |a_k|^2 \lambda_{\e, k} + |b_k|^2 \Big\}.
\end{equation}
Also, note that
\begin{equation}\label{last-2}
\aligned
\| \mathcal{L}_\e ( \varphi_{\e, 0})  \|_{L^2(\Omega)}^2
+\| \nabla \varphi_{\e, 1} \|_{L^2(\Omega)}^2
 & \le C  \sum_{\lambda_{\e, k} \le N}
\Big\{ |a_k|^2 \lambda_{\e, k}^2  + |b_k|^2 \lambda_{\e, k}  \Big\}\\
&  \le C N   \sum_{\lambda_{\e, k} \le N}
\Big\{ |a_k|^2 \lambda_{\e, k}  + |b_k|^2  \Big\},
\endaligned
\end{equation}
and that
\begin{equation}\label{last-3}
\aligned
\| \mathcal{L}_\e ( \varphi_{\e, 0})  \|_{H^1(\Omega)}^2
+\| \mathcal{L}_\e (\varphi_{\e, 1})  \|_{L^2(\Omega)}^2
 & \le C  \sum_{\lambda_{\e, k} \le N}
\Big\{ |a_k|^2 \lambda_{\e, k}^3  + |b_k|^2 \lambda_{\e, k}^2  \Big\}\\
&  \le C N^2   \sum_{\lambda_{\e, k} \le N}
\Big\{ |a_k|^2 \lambda_{\e, k}  + |b_k|^2  \Big\}.
\endaligned
\end{equation}
In view of Theorem \ref{theorem-4.1} we obtain 
$$
\aligned
\int_0^T\!\!\!\int_{\partial\Omega}
|\nabla u_\e|^2\, d\sigma dt
&\le C T \Big\{ 1+ \e N + T^2 \e N^2
+ \e^3 N^2 \Big\}
\Big\{ \|\nabla \varphi_{\e, 0} \|^2_{L^2(\Omega)}
+ \|\varphi_{\e, 1} \|_{L^2(\Omega)}^2 \Big\}\\
&\le CT  \Big\{ \|\nabla \varphi_{\e, 0} \|^2_{L^2(\Omega)}
+ \|\varphi_{\e, 1} \|_{L^2(\Omega)}^2 \Big\},
\endaligned
$$
if $N\le C_0 T^{-1} \e^{-1/2}$.
This gives the inequality (\ref{upper-b}).
The inequality (\ref{lower-b}) follows from Theorem \ref{theorem-4.2} in a similar manner.
Indeed, by Theorem \ref{theorem-4.2} and \eqref{last-1}-\eqref{last-3}, if $T\ge T_0$,
$$
\aligned
& T  \Big\{ \|\nabla \varphi_{\e, 0} \|^2_{L^2(\Omega)}
+ \| \varphi_{\e, 1} \|^2_{L^2(\Omega)} \Big\}\\
&
 \le  C \int_0^T\!\!\! \int_{\partial\Omega} |\nabla u_\e|^2\, d\sigma dt
 +CT \Big \{ \e N +T^2\e N^2
+ \e^3 N^2  \Big\}\Big\{ \|\nabla \varphi_{\e, 0} \|^2_{L^2(\Omega)}
+ \| \varphi_{\e, 1} \|^2_{L^2(\Omega)} \Big\},
\endaligned
$$
where $T_0$ and $C$ depends only on $d$, $\mu$, $M$, and $\Omega$.
As a result, we obtain (\ref{lower-b} if $N\le c_0 T^{-1} \e^{-1/2}$, where $c_0=c_0(d, \mu, M, \Omega)>0 $ is so small that
$$
C \big\{ \e N + T^2\e N^2 +\e^3 N^2 \big\} \le (1/2).
$$
This completes the proof.
\end{proof}

\begin{remark}
{\rm 
Let $\Gamma$ be a subset of $\partial\Omega $.
Suppose that there exist $T>0$ and $c_0>0$ such that  the inequality 
\begin{equation}\label{p-lower-0}
c_0 \Big\{ \|\nabla \varphi_0 \|^2_{L^2(\Omega)}
+ \|\varphi_1 \|^2_{L^2(\Omega)} \Big\}
\le \frac{1}{T} \int_0^T\!\!\! \int_\Gamma |\nabla u_0|^2\, d\sigma dt
\end{equation}
holds for solutions  $u_0$ of the homogenized problem (\ref{BVP-3-0}).
It follows from the proof of Theorem \ref{main-theorem-2} that if $N \le \delta \e^{-1/2}$ and
$\delta=\delta(c_0, T, \Omega, A)>0$ is sufficiently small, 
the inequality 
\begin{equation}\label{p-lower}
c \Big\{ \|\nabla \varphi_{\e, 0} \|^2_{L^2(\Omega)}
+ \|\varphi_{\e, 1}  \|^2_{L^2(\Omega)} \Big\}
\le \frac{1}{T} \int_0^T\!\!\!\int_\Gamma 
 |\nabla u_\e|^2\, d\sigma dt
\end{equation}
}
holds for solutions $u_\e$ 
 of (\ref{BVP-0}) with initial data $(\varphi_{\e, 0}, \varphi_{\e, 1}) $  in $\mathcal{A}_N \times \mathcal{A}_N$.
\end{remark}

Given $(\theta_{\e, 0}, \theta_{\e, 1})\in L^2(\Omega) \times H^{-1}(\Omega)$,
to find a control $g_\e\in L^2(S_T)$ such that the solution of (\ref{E-0}) satisfies 
the projection  condition (\ref{P-C}),
one considers the functional, 
$$
J_\e (\varphi_{\e, 0}, \varphi_{\e, 1})
=-\langle  \theta_{\e, 1},  u_\e (x, 0)\rangle_{H^{-1}(\Omega) \times H_0^1(\Omega)} 
+ \int_\Omega  \theta_{\e, 0} \partial_t u_\e (x, 0)  dx
+\frac12 \int_0^T\!\!\!\int_{\partial\Omega}
\left(\frac{\partial u_\e}{\partial\nu_\e}\right)^2 d\sigma dt,
$$
where $\frac{\partial u_\e}{\partial \nu_\e} =n_i a_{ij}(x/\e) \frac{\partial u_\e}{\partial x_j}$
denotes the conormal derivative associated with $\mathcal{L}_\e$, and
$u_\e$ is the solution of
\begin{equation}\label{dual-p}
\left\{
\aligned
& (\partial^2_t +\mathcal{L}_\e) u_\e =0  \quad  \text{ in } \Omega_T,\\
& u_\e =0 \quad \text{ on } S_T\\
& u_\e (x, T) =\varphi_{\e, 0}, \quad \partial_t u_\e (x, T)= \varphi_{\e, 1} \quad \text{ for } x\in \Omega.
\endaligned
\right.
\end{equation}
Since the time is reversible in the wave equation, 
it follows from Theorem \ref{main-theorem-2} that
if $( \varphi_{\e, 0}, \varphi_{\e, 1}) \in \mathcal{A}_N \times \mathcal{A}_N$ and  $N\le \delta \e^{-2/3}$, 
\begin{equation}\label{control-b}
\aligned
c\Big\{ \| \nabla \varphi_{\e, 0} \|^2_{L^2(\Omega)}
+ \|\varphi_{\e, 1} \|^2_{L^2(\Omega)} \Big\}
 & \le \int_0^T\!\!\! \int_{\partial\Omega}
\left(\frac{\partial u_\e}{\partial \nu_\e} \right)^2  d\sigma dt\\
& \le 
C \Big\{ \| \nabla \varphi_{\e, 0} \|^2_{L^2(\Omega)}
+ \|\varphi_{\e, 1} \|^2_{L^2(\Omega)}\Big\},
\endaligned
\end{equation}
where $C>0$ and $c>0$ are independent of $\e>0$.
As a functional on $\mathcal{A}_N \times \mathcal{A}_N \subset  H^1_0(\Omega) \times L^2(\Omega)$,
$J_\e$ is continuous, strictly convex, and satisfies the coercivity estimate, 
$$
J_\e (\varphi_{\e, 0}, \varphi_{\e, 1})
\ge 
c\Big\{ \| \nabla \varphi_{\e, 0} \|^2_{L^2(\Omega)}
+ \|\varphi_{\e, 1} \|^2_{L^2(\Omega)} \Big\}
-C \Big\{ \|\theta_{\e, 0} \|_{L^2(\Omega)}^2
+ \|\theta_{\e, 1}\|_{H^{-1}(\Omega)}^2 \Big\}.
$$
This implies that $J_\e$ possesses a unique minimum  $J_\e (\phi_0, \phi_1)$ on $\mathcal{A}_N \times \mathcal{A}_N$.
Let $w_\e$ be the solution of (\ref{dual-p}) with data $( w_\e (x, T), \partial_t w_\e (x, T)) =(\phi_{ 0}, \phi_{1})$.
By the first variational principle,
\begin{equation}\label{c-1}
-\langle  \theta_{\e, 1},  u_\e (x, 0)\rangle_{H^{-1}(\Omega) \times H_0^1(\Omega)} 
+ \int_\Omega  \theta_{\e, 0} \partial_t u_\e (x, 0)  dx
+ \int_0^T\!\!\!\int_{\partial\Omega}
\frac{\partial w_\e}{\partial \nu_\e} \cdot
\frac{\partial u_\e}{\partial \nu_\e} \, d\sigma dt
=0,
\end{equation}
for any solution  $u_\e$ of (\ref{dual-p}) with data $(\varphi_{\e, 0}, \varphi_{\e, 1}) \in \mathcal{A}_N\times  \mathcal{A}_N$.
As a result, the function $g_\e=\frac{\partial w_\e}{\partial \nu_\e}$ is a control that gives (\ref{P-C}).
Indeed, let $v_\e$ be the solution of (\ref{E-0}) with $g_\e=\frac{\partial w_\e}{\partial\nu_\e}$,
then
$$
\aligned
& \langle \partial_t v_\e (\cdot, T), \varphi_{\e, 0} \rangle_{H^{-1}(\Omega) \times H_0^1(\Omega)}
-\int_\Omega v_\e (x, T) \varphi_{\e, 1} (x)\, dx\\
&=
\langle \theta_{\e, 1}, u_\e (\cdot, T) \rangle_{H^{-1}(\Omega) \times H^1_0(\Omega)}
-\int_\Omega \theta_{\e, 0} \partial_t u_\e (x, 0)\, dx
-\int_0^T \!\!\! 
\int_{\partial\Omega}
g_\e \frac{\partial u_\e}{\partial \nu_\e} \, d\sigma dt\\
&=0
\endaligned
$$
for any $(\varphi_{\e, 0}, \varphi_{\e, 1} ) \in \mathcal{A}_N \times \mathcal{A}_N$.
This shows that $P_N v_\e (x, T)=0$ and
$P_N \partial_t v_\e (x, T)=0$ for $x\in \Omega$.
One may also use (\ref{c-1}) to show that 
among all controls that give (\ref{P-C}),
$g_\e=\frac{\partial w_\e}{\partial\nu_\e}$ has the minimal $L^2(S_T)$ norm.

Finally, using $J_\e (\phi_0 , \phi_1 ) \le J(0, 0)=0$ and (\ref{control-b}),
one may deduce that
$$
\int_0^T\!\!\! \int_{\partial\Omega}
|g_\e|^2 d\sigma dt
\le C \Big\{ \| P_N \theta_{\e, 0} \|_{L^2(\Omega)}^2
+ \| P_N \theta_{\e, 1}\|_{H^{-1}(\Omega)}^2 \Big\}.
$$
By a duality argument \cite{Castro-1999} and (\ref{control-b}),
one may also show that
$$
c \Big\{ \| P_N \theta_{\e, 0} \|_{L^2(\Omega)}^2
+ \| P_N \theta_{\e, 1}\|_{H^{-1}(\Omega)}^2 \Big\}
\le 
\int_0^T\!\!\! \int_{\partial\Omega}
|g_\e|^2   d\sigma dt.
$$
We omit the details and refer the reader to \cite{Castro-1999} for the one-dimensional case.

 \bibliographystyle{amsplain}
 
\bibliography{Lin-Shen-2019.bbl}

\providecommand{\bysame}{\leavevmode\hbox to3em{\hrulefill}\thinspace}
\providecommand{\MR}{\relax\ifhmode\unskip\space\fi MR }
\providecommand{\MRhref}[2]{%
  \href{http://www.ams.org/mathscinet-getitem?mr=#1}{#2}
}
\providecommand{\href}[2]{#2}
\begin{thebibliography}{10}

\bibitem{Ave-1992}
M.~Avellaneda, C.~Bardos, and J.~Rauch, \emph{Contr\^{o}labilit\'{e} exacte,
  homog\'{e}n\'{e}isation et localisation d'ondes dans un milieu
  non-homog\`ene}, Asymptotic Anal. \textbf{5} (1992), no.~6, 481--494.

\bibitem{AL-1987}
M.~Avellaneda and F.~Lin, \emph{Compactness methods in the theory of
  homogenization}, Comm. Pure Appl. Math. \textbf{40} (1987), no.~6, 803--847.

\bibitem{AL-1989}
\bysame, \emph{Homogenization of {P}oisson's kernel and applications to
  boundary control}, J. Math. Pures Appl. (9) \textbf{68} (1989), no.~1, 1--29.

\bibitem{Bardos-2006}
C.~Bardos, \emph{Distributed control and observation}, Lecture Notes in Control
  and Information Sciences \textbf{330} (2006), 139--156.

\bibitem{Bardos-1992}
C.~Bardos, G.~Lebeau, and J.~Rauch, \emph{Sharp sufficient conditions for the
  observation, control, and stabilization of waves from the boundary}, SIAM J.
  Control Optim. \textbf{30} (1992), no.~5, 1024--1065.

\bibitem{BLP}
A.~Bensoussan, J.-L. Lions, and G.~Papanicolaou, \emph{Asymptotic analysis for
  periodic structures}, Studies in Mathematics and its Applications, vol.~5,
  North-Holland Publishing Co., Amsterdam-New York, 1978.

\bibitem{Burq-1997}
N.~Burq and P.~G\'{e}rard, \emph{Condition n\'{e}cessaire et suffisante pour la
  contr\^{o}labilit\'{e} exacte des ondes}, C. R. Acad. Sci. Paris S\'{e}r. I
  Math. \textbf{325} (1997), no.~7, 749--752.

\bibitem{Castro-1999}
C.~Castro, \emph{Boundary controllability of the one-dimensional wave equation
  with rapidly oscillating density}, Asymptot. Anal. \textbf{20} (1999),
  no.~3-4, 317--350.

\bibitem{CZ-2000-h}
C.~Castro and E.~Zuazua, \emph{High frequency asymptotic analysis of a string
  with rapidly oscillating density}, European J. Appl. Math. \textbf{11}
  (2000), no.~6, 595--622.

\bibitem{Castro-2000-l}
\bysame, \emph{Low frequency asymptotic analysis of a string with rapidly
  oscillating density}, SIAM J. Appl. Math. \textbf{60} (2000), no.~4,
  1205--1233.

\bibitem{Suslina-2018-W}
M.A. Dorodnyi and T.A. Suslina, \emph{Spectral approach to homogenization of
  hyperbolic equations with periodic coefficients}, J. Differential Equations
  \textbf{264} (2018), no.~12, 7463--7522.

\bibitem{HT-2002}
A.~Hassell and T.~Tao, \emph{Upper and lower bounds for normal derivatives of
  {D}irichlet eigenfunctions}, Math. Res. Lett. \textbf{9} (2002), no.~2-3,
  289--305.

\bibitem{KLS-eigenvalue}
C.~Kenig, F.~Lin, and Z.~Shen, \emph{Estimates of eigenvalues and
  eigenfunctions in periodic homogenization}, J. Eur. Math. Soc. (JEMS)
  \textbf{15} (2013), no.~5, 1901--1925.

\bibitem{KLS-2014}
\bysame, \emph{Periodic homogenization of {G}reen and {N}eumann functions},
  Comm. Pure Appl. Math. \textbf{67} (2014), no.~8, 1219--1262.

\bibitem{Lebeau-2000}
G.~Lebeau, \emph{The wave equation with oscillating density: observability at
  low frequency}, ESAIM Control Optim. Calc. Var. \textbf{5} (2000), 219--258.

\bibitem{Lions-1988}
J.-L. Lions, \emph{Exact controllability, stabilization and perturbations for
  distributed systems}, SIAM Rev. \textbf{30} (1988), no.~1, 1--68.

\bibitem{Meshkova-2018}
Yu.~M. Meshkova, \emph{On homogenization of the first initial-boundary value
  problem for periodic hyperbolic systems}, Applicable Analysis (2018).

\bibitem{PP-2006}
P.~Pedregal and F.~Periago, \emph{Some remarks on homogenization and exact
  boundary controllability for the one-dimensional wave equation}, Quart. Appl.
  Math. \textbf{64} (2006), no.~3, 529--546.

\bibitem{Suslina-2017}
T.A. Suslina, \emph{Spectral approach to homogenization of nonstationary
  {S}chr\"{o}dinger-type equations}, J. Math. Anal. Appl. \textbf{446} (2017),
  no.~2, 1466--1523.

\end{thebibliography}

\bigskip

\begin{flushleft}

Fanghua Lin, 
Courant Institute of Mathematical Sciences, 
251 Mercer Street, 
New York, NY 10012, USA.

Email: linf@cims.nyu.edu

\bigskip

Zhongwei Shen (corresponding author),
Department of Mathematics,
University of Kentucky,
Lexington, Kentucky 40506,
USA.

E-mail: zshen2@uky.edu
\end{flushleft}

\bigskip

\medskip

\end{document}